\def\dom{\mathop{\mathrm{Dom}}\nolimits}
\def\im{\mathop{\mathrm{Im}}\nolimits}
\def\id{\mathrm{id}}
\def\N{\mathbb N}
\def\PT{\mathcal{PT}}
\def\T{\mathcal{T}}
\def\Sym{\mathcal{S}}
\def\A{\mathcal{A}}
\def\C{\mathcal{C}}
\def\POPI{\mathcal{POPI}}
\def\CI{\mathcal{CI}}
\def\OCI{\mathcal{OCI}}
\def\I{\mathcal{I}}
\def\ro{{\hspace{.2em}}\rho_R{\hspace{.2em}}}
\newtheorem{theorem}{Theorem}[section]
\newtheorem{proposition}[theorem]{Proposition}
\newtheorem{lemma}[theorem]{Lemma}
\newenvironment{proof}{\begin{trivlist}\item[\hskip%
\labelsep{\bf Proof.}]}%
{\qed\rm\end{trivlist}}
\newcommand{\qed}{{\unskip\nobreak
\hfil\penalty50\hskip .001pt \hbox{}
          \nobreak\hfil
         \vrule height 1.2ex width 1.1ex depth -.1ex
           \parfillskip=0pt\finalhyphendemerits=0\medbreak}}
\newcommand{\lastpage}{\addresss}
\newcommand{\addresss}{\small \sf  
\noindent{\sc V\'\i tor H. Fernandes}, 
Center for Mathematics and Applications (NovaMath) 
and Department of Mathematics, FCT NOVA, 
Faculdade de Ci\^encias e Tecnologia, 
Universidade Nova de Lisboa, 
Monte da Caparica, 
2829-516 Caparica, 
Portugal; 
e-mail: vhf@fct.unl.pt. 
}
\title{On the cyclic inverse monoid on a finite set}
\author{V\'\i tor H. Fernandes\footnote{This work is funded by national funds through the FCT - Funda\c c\~ao para a Ci\^encia e a Tecnologia, I.P., under the scope of the projects UIDB/00297/2020 and UIDP/00297/2020 (NovaMath - Center for Mathematics and Applications).}~
}
\begin{document}

\maketitle

\begin{abstract}
In this paper we study the cyclic inverse monoid $\CI_n$ on a set $\Omega_n$ with $n$ elements, 
i.e. the inverse submonoid of the symmetric inverse monoid on $\Omega_n$ consisting of all restrictions of the elements of a cyclic subgroup of order $n$ 
acting cyclically on $\Omega_n$. We show that $\CI_n$ has rank $2$ (for $n\geqslant2$) and $n2^n-n+1$ elements. 
Moreover, we give presentations of $\CI_n$ on $n+1$ generators and $\frac{1}{2}(n^2+3n+4)$ relations and on $2$ generators and $\frac{1}{2}(n^2-n+6)$ relations. 
We also consider the remarkable inverse submonoid $\OCI_n$ of $\CI_n$ constituted by all its order-preserving transformations. 
We show that $\OCI_n$ has rank $n$ and $3\cdot 2^n-2n-1$ elements. Furthermore, we exhibit presentations of $\OCI_n$ on $n+2$ generators and $\frac{1}{2}(n^2+3n+8)$ relations and on $n$ generators and $\frac{1}{2}(n^2+3n)$ relations.  
\end{abstract}

\medskip

\noindent{\small 2020 \it Mathematics subject classification: \rm 20M20, 20M05.} 

\noindent{\small\it Keywords: \rm partial permutations, cyclic group, order-preserving, orientation-preserving, rank, presentations.}

\section*{Introduction}\label{presection} 

For $n\in\N$, let $\Omega_n$ be a set with $n$ elements, e.g. $\Omega_n=\{1,2,\ldots,n\}$.  
As usual, denote by $\PT_n$ the monoid (under composition) of all 
partial transformations on $\Omega_n$, 
by $\T_n$ the submonoid of $\PT_n$ of all full transformations on $\Omega_n$, 
by $\I_n$ the \textit{symmetric inverse monoid} on $\Omega_n$, i.e. 
the inverse submonoid of $\PT_n$ of all 
partial permutations on $\Omega_n$, 
and by $\Sym_n$ the \textit{symmetric group} on $\Omega$, 
i.e. the subgroup of $\PT_n$ of all 
permutations on $\Omega$. 

Let $G$ be a subgroup of $\Sym_n$ and define $\I_n(G)=\{\alpha\in\PT_n\mid \mbox{$\alpha=\sigma|_{\dom(\alpha)}$, for some $\sigma\in G$}\}$.  
It is easy to check that $\I_n(G)$ is an inverse submonoid of $\I_n$ whose group of units is precisely $G$. By taking $G=\Sym_n$, $G=\A_n$ or $G=\{\id_n\}$,
where $\A_n$ denotes the \textit{alternating group} on $\Omega_n$ and $\id_n$ is the identity transformation of $\Omega_n$, 
we obtain important and well-known inverse submonoids of $\I_n$. In fact, clearly,  
$\I_n(\Sym_n)=\I_n$ and $\I_n(\{\id_n\})=\mathscr{E}_n$, the semilattice of all idempotents of $\I_n$. On the other hand, 
$\I_n(\A_n)=\A^c_n$, the \textit{alternating semigroup} (see \cite[Chapters 6 and 10]{Lipscomb:1996}). 
In this work we are interested in studying the inverse monoid $\I_n(G)$ for an elementary but very important subgroup $G$ of $\Sym_n$, 
namely a cyclic subgroup of $\Sym_n$ of order $n$ acting cyclically on $\Omega_n$. 

\smallskip 

Recall that the \textit{rank} of a (finite) monoid $M$ is the minimum size of a generating set of $M$, i.e. 
the minimum of the set $\{|X|\mid \mbox{$X\subseteq M$ and $X$ generates $M$}\}$. 

For $n\geqslant3$ it is well-known that $\Sym_n$
has rank $2$ (as a semigroup, a monoid or a group) and 
$\T_n$, $\I_n$ and $\PT_n$ have ranks $3$, $3$ and $4$, respectively.
The survey \cite{Fernandes:2002survey} presents 
these results and similar ones for other classes of transformation monoids,
in particular, for monoids of order-preserving transformations and
for some of their extensions. 
For example, the rank of the extensively studied monoid of all order-preserving transformations of a $n$-chain is $n$,  
which was proved by Gomes and Howie \cite{Gomes&Howie:1992} in 1992. 
More recently, for instance, the papers 
\cite{
Araujo&al:2015,
Fernandes&al:2014,
Fernandes&al:2019,
Fernandes&Quinteiro:2014,
Fernandes&Sanwong:2014} 
are dedicated to the computation of the ranks of certain classes of transformation semigroups or monoids.

\smallskip 

A \textit{monoid presentation} is an ordered pair 
$\langle A\mid R\rangle$, where $A$ is a set, often called an \textit{alphabet}, 
and $R\subseteq A^*\times A^*$ is a set of relations of 
the free monoid $A^*$ generated by $A$. 
A monoid $M$ is said to be 
\textit{defined by a presentation} $\langle A\mid R\rangle$ if $M$ is
isomorphic to $A^*/\rho_R$, where $\rho_R$ denotes the smallest
congruence on $A^*$ containing $R$. 

A presentation for the symmetric group $\Sym_n$ was determined by Moore \cite{Moore:1897} over a century ago (1897). 
For the full transformation monoid $\T_n$, a presentation  
was given in 1958 by A\u{\i}zen\v{s}tat \cite{Aizenstat:1958} in terms of a certain 
type of two generators presentation for the symmetric group $\Sym_n$, 
plus an extra generator and seven more relations. 
Presentations for the partial transformation monoid $\PT_n$ 
and for the symmetric inverse monoid $\I_n$
were found by Popova \cite{Popova:1961} in 1961. 
In 1962, A\u{\i}zen\v{s}tat \cite{Aizenstat:1962} and Popova \cite{Popova:1962} exhibited presentations for the monoids of 
all order-preserving transformations and of all order-preserving partial transformations of a finite chain, respectively, and from the sixties until our days several authors obtained presentations for many classes of monoids. 
See also \cite{Ruskuc:1995}, the survey \cite{Fernandes:2002survey} and, 
for example, 
\cite{Cicalo&al:2015,
East:2011, 
Feng&al:2019,
Fernandes:2001, 
Fernandes&Gomes&Jesus:2004, 
Fernandes&Quinteiro:2016, 
Howie&Ruskuc:1995}. 

\smallskip 

Next, suppose that $\Omega_n$ is a chain, e.g. $\Omega_n=\{1<2<\cdots<n\}$.  
Given a partial transformation $\alpha\in\PT_n$ such that
$\dom(\alpha)=\{a_1<\cdots<a_t\}$, with $t\geqslant0$, we 
say that $\alpha$ is 
\textit{order-preserving} if $a_1\alpha\leqslant\cdots\leqslant a_t\alpha$
and that $\alpha$ is \textit{orientation-preserving} 
 if there exists no more than one index $i\in\{1,\ldots,t\}$ such that
$a_i\alpha>a_{i+1}\alpha$,
where $a_{t+1}$ denotes $a_1$. 
See \cite{Catarino&Higgins:1999,Fernandes:2001,Fernandes:2002survey,Higgins&Vernitski:2022,McAlister:1998}. 
We denote by $\POPI_n$ the submonoid of $\PT_n$ 
of all injective orientation-preserving
partial transformations. Notice that $\POPI_n$ is an inverse submonoid of $\I_n$ 
that was introduced and studied by the author in \cite{Fernandes:2000}. 
See also \cite{Fernandes&Gomes&Jesus:2004,Fernandes&Gomes&Jesus:2009}. 

\smallskip 

Now, consider the permutation 
$$
g=\begin{pmatrix} 
1&2&\cdots&n-1&n\\
2&3&\cdots&n&1
\end{pmatrix} 
$$
of $\Omega_n$ of order $n$ and denote by $\C_n$ the \textit{cyclic group} of order $n$ generated by $g$, i.e. 
$\C_n=\{1,g,g^2,\ldots,g^{n-1}\}$.  
Let us denote the monoid $\I_n(\C_n)$ by $\CI_n$.   
Then $\CI_n$ is an inverse submonoid of $\I_n$ whose group of units is $\C_n$. 
Moreover, as $\C_n\subseteq\POPI_n$ and 
each restriction of an orientation-preserving transformation is an  orientation-preserving transformation \cite[Proposition 2.1]{Fernandes:2000}, 
we also have that $\CI_n$ is an inverse submonoid of $\POPI_n$. 
Observe that $\CI_1=\I_1$ and $\CI_2=\I_2$. However $\CI_n\subsetneq\POPI_n$ for $n\geqslant3$. 
Given the definition of $\CI_n$, although it is not in general a \textit{monogenic} monoid, 
it seems appropriate to designate $\CI_n$ by the \textit{cyclic inverse monoid} on $\Omega_n$.  
A remarkable submonoid of $\CI_n$, which we denote by $\OCI_n$, is obtained when we consider all its order-preserving transformations. 
Clearly, $\OCI_n$ is an inverse submonoid of $\CI_n$. 

\smallskip 

This paper is organized as follows: in Section \ref{basics} we determine sizes and ranks of $\CI_n$ and $\OCI_n$; and 
in Section \ref{presentation} we give presentations of $\CI_n$ on $n+1$ generators and of $\OCI_n$ on $n+2$ generators 
followed by presentations of $\CI_n$ on $2$ generators  and of $\OCI_n$ on $n$ generators. 

\smallskip 

For general background on Semigroup Theory and standard notations, we refer to Howie's book \cite{Howie:1995}.

\smallskip 

We would like to point out that we made use of computational tools, namely GAP \cite{GAP4}.

\section{Sizes and ranks} \label{basics} 

We begin this section by calculating the size and the rank of $\CI_n$. 

Observe that 
$$
g^k=\begin{pmatrix} 
1&2&\cdots&n-k&n-k+1&\cdots&n\\
1+k&2+k&\cdots&n&1&\cdots&k
\end{pmatrix}, 
\quad\text{i.e.}\quad  
ig^k=\left\{\begin{array}{ll}
i+k & \mbox{if $1\leqslant i\leqslant n-k$}\\
i+k-n & \mbox{if $n-k+1\leqslant i\leqslant n$},
\end{array}\right.
$$
for $0\leqslant k\leqslant n-1$. Hence, for each pair $1\leqslant i,j\leqslant n$, there exists a unique $k\in\{0,1,\ldots,n-1\}$ such that $ig^k=j$. 
In fact, for $1\leqslant i,j\leqslant n$ and $k\in\{0,1,\ldots,n-1\}$, 
it is easy to show that: 
\begin{description}
\item if $i\leqslant j$ then $ig^k=j$ if and only if $k=j-i$; 

\item if $i>j$ then $ig^k=j$ if and only if $k=n+j-i$. 
\end{description} 

Thus, we can immediately conclude the following property of $\CI_n$:

\begin{lemma}\label{unique}
Any nonempty transformation of $\CI_n$ has exactly one extension in $\C_n$. 
\end{lemma}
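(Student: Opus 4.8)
The plan is to exploit the \emph{freeness} of the cyclic action, which is exactly what the displayed computation immediately preceding the statement records: for every pair $1\leqslant i,j\leqslant n$ there is a \emph{unique} exponent $k\in\{0,1,\ldots,n-1\}$ with $ig^k=j$. Since every element of $\C_n$ is of the form $g^k$ for a unique such $k$, this says that no two distinct powers of $g$ can agree at any single point of $\Omega_n$; equivalently, the pointwise stabiliser of any element of $\Omega_n$ under $\C_n$ is trivial. This will immediately yield the uniqueness we are after.

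First I would recall that, by the very definition of $\CI_n=\I_n(\C_n)$, any $\alpha\in\CI_n$ admits at least one extension in $\C_n$, so that the genuine content of the lemma is the \emph{uniqueness} of such an extension. I would therefore take a nonempty $\alpha\in\CI_n$ together with two extensions $\sigma,\tau\in\C_n$, meaning $\alpha=\sigma|_{\dom(\alpha)}=\tau|_{\dom(\alpha)}$, and write $\sigma=g^k$ and $\tau=g^l$ with $k,l\in\{0,1,\ldots,n-1\}$.

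Next, since $\alpha$ is nonempty I may choose some $i\in\dom(\alpha)$ and set $j=i\alpha$. Then $ig^k=i\sigma=i\alpha=j$ and likewise $ig^l=i\tau=i\alpha=j$. Appealing to the uniqueness of the exponent established above forces $k=l$, whence $\sigma=g^k=g^l=\tau$, as required.

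There is no genuine obstacle here: the statement is an immediate consequence of the regularity (freeness together with transitivity) of the cyclic action already verified in the excerpt. The only point worth keeping in mind is that a single point of the domain suffices to pin down the extension, which is precisely why the nonemptiness hypothesis is needed—the empty transformation is extended by every element of $\C_n$.
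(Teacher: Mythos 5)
Your proof is correct and follows exactly the paper's intended argument: the paper states the lemma as an immediate consequence of the preceding observation that for each pair $1\leqslant i,j\leqslant n$ there is a unique $k\in\{0,1,\ldots,n-1\}$ with $ig^k=j$, which is precisely the fact you invoke after evaluating both extensions at a single point of the domain. Your write-up simply makes explicit what the paper leaves to the reader, including the correct remark that nonemptiness is what makes the argument work.
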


It follows that the number of nonempty elements of $\CI_n$ coincides with the number of distinct nonempty restrictions of elements of $\C_n$, i.e. 
$|\CI_n\setminus\{\emptyset\}|=n\sum_{\ell=1}^{n}\binom{n}{\ell}=n(-1+\sum_{\ell=0}^{n}\binom{n}{\ell})=n(2^n-1)$. 

Therefore, we have:  

\begin{theorem}
For $n\geqslant1$, $|\CI_n|=n2^n-n+1$.
\end{theorem}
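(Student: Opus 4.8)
The plan is to count the size of $\CI_n$ by separating the empty transformation from the nonempty ones, since the computation of $|\CI_n\setminus\{\emptyset\}|$ has essentially already been carried out in the discussion immediately preceding the statement. First I would recall that, by Lemma \ref{unique}, every nonempty element $\alpha$ of $\CI_n$ is a restriction of a \emph{unique} power $g^k\in\C_n$; conversely, each $g^k$ together with each nonempty choice of domain $\dom(\alpha)\subseteq\Omega_n$ yields a nonempty element of $\CI_n$. This establishes a bijection between $\CI_n\setminus\{\emptyset\}$ and the set of pairs $(k,D)$, where $k\in\{0,1,\ldots,n-1\}$ indexes the power of $g$ and $D$ is a nonempty subset of $\Omega_n$ playing the role of the domain.

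From this bijection the count follows by elementary combinatorics. For a fixed $k$, the number of nonempty subsets $D\subseteq\Omega_n$ is $\sum_{\ell=1}^{n}\binom{n}{\ell}=2^n-1$, and there are $n$ admissible values of $k$, so that
$$
|\CI_n\setminus\{\emptyset\}|=n(2^n-1)=n2^n-n,
$$
exactly as computed in the paragraph before the theorem. Adding back the empty transformation $\emptyset$, which lies in $\CI_n$ as the restriction of any $g^k$ to the empty domain, gives
$$
|\CI_n|=|\CI_n\setminus\{\emptyset\}|+1=n2^n-n+1.
$$

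The only point requiring genuine care is the claim that distinct pairs $(k,D)$ produce distinct nonempty transformations, so that no overcounting occurs. Two restrictions $g^k|_D$ and $g^{k'}|_{D'}$ coincide as transformations precisely when $D=D'$ (equal domains) and $g^k$ and $g^{k'}$ agree on that common domain; but by Lemma \ref{unique} a nonempty transformation has exactly one extension in $\C_n$, forcing $g^k=g^{k'}$ and hence $k=k'$ since the powers $g^0,\ldots,g^{n-1}$ are pairwise distinct. Thus the correspondence is genuinely injective and the multiplication by $n$ is valid. I expect this uniqueness step to be the main—indeed the only—obstacle, and it is already dispatched by the preceding lemma; the remainder is a routine binomial identity.
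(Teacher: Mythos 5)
Your proof is correct and follows essentially the same route as the paper: both use Lemma \ref{unique} to put the nonempty elements of $\CI_n$ in bijection with pairs consisting of a power $g^k$ and a nonempty domain, yielding $n(2^n-1)$ nonempty elements, and then add $1$ for the empty transformation. The only difference is that you spell out the injectivity of this correspondence explicitly, whereas the paper leaves it implicit in the phrase ``the number of distinct nonempty restrictions.''
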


For $X\subseteq\Omega_n$, denote by $\id_X$ the partial identity with domain $X$, i.e. $\id_X=\id_n|_X$. 
Let 
$$
e_i=\id_{\Omega_n\setminus\{i\}}=
\begin{pmatrix} 1&\cdots&i-1&i+1&\cdots&n\\
1&\cdots&i-1&i+1&\cdots&n
\end{pmatrix}\in\CI_n, 
$$
for $i=1,2,\ldots,n$. Clearly, for $1\leqslant i,j\leqslant n$, we have $e_i^2=e_i$ and $e_ie_j=\id_{\Omega_n\setminus\{i,j\}}=e_je_i$. 
More generally, for any $X\subseteq\Omega_n$, we get $\Pi_{i\in X}e_i=\id_{\Omega_n\setminus X}$. 

Now, take $\alpha\in\CI_n$. Then, by definition, $\alpha=g^i|_{\dom(\alpha)}$, 
for some $i\in\{0,1,\ldots,n-1\}$, and so we obtain $\alpha=\id_{\dom(\alpha)}g^i=(\Pi_{k\in\Omega_n\setminus\dom(\alpha)}e_k)g^i$. 
Hence 
$
\{g,e_1,e_2,\ldots,e_n\}
$
is a generating set of $\CI_n$. Since $e_i=g^{n-i+1}e_1g^{i-1}$ for all $i\in\{1,2,\ldots,n\}$, 
it follows that $\{g,e_1\}$ is also a generating set of $\CI_n$. 
For $n\geqslant2$,  as $|\C_n|>1$ and $\C_n$ is the group of units of $\CI_n$,
the monoid $\CI_n$ cannot be generated by less than two elements. So, we have: 

\begin{theorem}
For $n\geqslant2$, the monoid $\CI_n$ has rank $2$.
\end{theorem}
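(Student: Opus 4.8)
The plan is to bound the rank from above and from below, both of which are essentially prepared by the discussion preceding the statement. For the upper bound I would simply invoke the generating set already exhibited: we have seen that $\{g,e_1,\ldots,e_n\}$ generates $\CI_n$, and since $e_i=g^{n-i+1}e_1g^{i-1}$ for every $i\in\{1,\ldots,n\}$, the two-element set $\{g,e_1\}$ generates $\CI_n$ as well. Thus the rank is at most $2$.

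For the lower bound I would argue that $\CI_n$ cannot be generated by a single element. The crucial structural observation is that the non-units of $\CI_n$, that is, the partial permutations whose domain is a proper subset of $\Omega_n$, form an ideal. Indeed, for $\alpha,\beta\in\CI_n$ one has $\dom(\alpha\beta)\subseteq\dom(\alpha)$ and $\im(\beta\alpha)\subseteq\im(\alpha)$, so whenever a product involves a non-unit factor the result is again a non-unit. In particular, any product of elements of $\CI_n$ that happens to be a unit must be a product of units only.

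With this in hand, suppose $\CI_n=\langle a\rangle$ for some single $a\in\CI_n$. If $a$ is a non-unit, then every non-identity power of $a$ is a non-unit by the ideal property, so the only unit in $\langle a\rangle$ would be the identity; this contradicts $|\C_n|=n\geqslant2$, since $\C_n$ is the group of units. If instead $a$ is a unit, then $\langle a\rangle\subseteq\C_n$, so $\CI_n$ would contain no proper restrictions at all, again contradicting $n\geqslant2$ (for instance $e_1\notin\C_n$). Hence no singleton generates $\CI_n$, and since the empty set generates only the trivial monoid, the rank is at least $2$.

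Combining the two bounds gives rank exactly $2$. The only point requiring genuine thought is the lower bound, and there the real work is the remark that the non-units form an ideal; once that is noted, the two-case argument is immediate. Everything else is a direct appeal to the generating set computed above.
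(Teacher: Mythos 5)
Your proof is correct and takes essentially the same route as the paper: the upper bound uses the identical generating set $\{g,e_1\}$ obtained from $e_i=g^{n-i+1}e_1g^{i-1}$, and your lower bound simply makes explicit (via the observation that the non-units of $\CI_n$ form an ideal) the paper's one-line argument that a monoid with non-trivial group of units $\C_n$ and with non-units cannot be generated by fewer than two elements.
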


Observe that, as a monoid, $\CI_1=\I_1$ has rank $1$. However, as a semigroup, $\CI_n$ has rank $2$ for all $n\in\N$. 

\medskip

Next, we deduce the size and rank of $\OCI_n$. 

Clearly, the elements of $\OCI_n$ are all restrictions of 
$$
g^ke_1\cdots e_k=\begin{pmatrix} 
1&2&\cdots&n-k\\
1+k&2+k&\cdots&n
\end{pmatrix}
\quad\text{and}\quad 
g^ke_{k+1}\cdots e_n=\begin{pmatrix} 
n-k+1&n-k+2&\cdots&n\\
1&2&\cdots&k
\end{pmatrix}, 
$$
for $0\leqslant k\leqslant n-1$, whence 
$$
|\OCI_n|=1 + \sum_{k=0}^{n-1}\left(\sum_{i=1}^{n-k} \binom{n-k}{i} + \sum_{i=1}^{k}\binom{k}{i}\right) = 
\sum_{k=0}^{n-1}\left( 2^{n-k}-1+2^k-1\right) = 3\cdot 2^n-2n-2. 
$$

Thus, we have:  

\begin{theorem}
For $n\geqslant1$, $|\OCI_n|=3\cdot 2^n-2n-2$.
\end{theorem}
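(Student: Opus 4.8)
The plan is to describe $\OCI_n$ explicitly as a disjoint union of families of restrictions, organized according to the unique element of $\C_n$ extending each map (Lemma \ref{unique}), and then to count. Let $\alpha\in\OCI_n$ be nonempty, so that $\alpha=g^k|_{\dom(\alpha)}$ for a unique $k\in\{0,1,\ldots,n-1\}$. The formula for $g^k$ shows that $g^k$ is increasing on the block $\{1,\ldots,n-k\}$ (mapping it onto $\{1+k,\ldots,n\}$) and increasing on the block $\{n-k+1,\ldots,n\}$ (mapping it onto $\{1,\ldots,k\}$). First I would prove the key claim that $g^k|_D$ is order-preserving precisely when $D$ is contained in one of these two blocks: if $D$ lies in a single block then $g^k|_D$ is a restriction of an increasing map and so order-preserving, while if $D$ meets both blocks, choosing $a\in D\cap\{1,\ldots,n-k\}$ and $b\in D\cap\{n-k+1,\ldots,n\}$ gives $a<b$ but $ag^k=a+k>b+k-n=bg^k$ (as $a+n>b$), a descent. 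Using $\Pi_{i\in X}e_i=\id_{\Omega_n\setminus X}$, the order-preserving restrictions of $g^k$ are then exactly the restrictions of $g^ke_1\cdots e_k$ and of $g^ke_{k+1}\cdots e_n$, the two maps displayed before the statement.

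Next I would count these restrictions, which is where the real care is needed. The number of nonempty restrictions of $g^ke_1\cdots e_k$ (whose domain has $n-k$ elements) is $2^{n-k}-1$, and that of $g^ke_{k+1}\cdots e_n$ (domain of size $k$) is $2^k-1$. Summing over $k$ and adding the empty transformation gives the claimed value once I know the union is disjoint, i.e. that each nonempty element is counted exactly once: restrictions arising from different $k$ are distinct by Lemma \ref{unique} (different extensions in $\C_n$), and for a fixed $k\geqslant1$ the two families have the disjoint domains $\{1,\ldots,n-k\}$ and $\{n-k+1,\ldots,n\}$, so no nonempty restriction can belong to both; for $k=0$ the second family is just the empty map and causes no clash.

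Finally I would evaluate $1+\sum_{k=0}^{n-1}\big((2^{n-k}-1)+(2^k-1)\big)$ using $\sum_{k=0}^{n-1}2^{n-k}=2^{n+1}-2$ and $\sum_{k=0}^{n-1}2^k=2^n-1$ to obtain $3\cdot2^n-2n-2$, exactly the displayed computation. The main obstacle is the disjointness check in the counting step: because the stated formula carries no inclusion-exclusion correction, the whole argument hinges on verifying that the families are pairwise disjoint, which is precisely what Lemma \ref{unique} and the disjoint-domain observation secure.
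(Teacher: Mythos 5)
Your proof is correct and takes essentially the same route as the paper: the paper likewise decomposes $\OCI_n$ into the restrictions of the two maps $g^ke_1\cdots e_k$ and $g^ke_{k+1}\cdots e_n$ for $0\leqslant k\leqslant n-1$ and evaluates the very same sum $1+\sum_{k=0}^{n-1}\bigl((2^{n-k}-1)+(2^k-1)\bigr)=3\cdot2^n-2n-2$. The only difference is one of detail: you explicitly prove the block/descent characterization of which restrictions of $g^k$ are order-preserving and verify the pairwise disjointness of the families (via Lemma \ref{unique} and the disjoint domains), steps the paper treats as clear.
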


Now, let 
$$
x=ge_1=\begin{pmatrix} 
1&2&\cdots&n-1\\
2&3&\cdots&n
\end{pmatrix}
\quad\text{and}\quad 
y=x^{-1}=g^{n-1}e_n=\begin{pmatrix} 
2&3&\cdots&n\\
1&2&\cdots&n-1
\end{pmatrix}. 
$$
Then, it is easy to check that 
\begin{equation}\label{xkyk}
x^k=g^ke_1\cdots e_k
\quad\text{and}\quad 
y^k= g^{n-k}e_{n-k+1}\cdots e_n,  
\end{equation}
for $1\leqslant k\leqslant n$. 
Hence, the elements of $\OCI_n$ are all restrictions of $\id_n$ and of $x^k$ and $y^k$, with $1\leqslant k\leqslant n-1$. 
It follows that $\{x,y,e_1,e_2,\ldots,e_n\}$ generates the monoid $\OCI_n$. Since $xy=e_n$ and $yx=e_1$, we have that 
$\{x,y,e_2,\ldots,e_{n-1}\}$ is also a generating set of $\OCI_n$. On the other hand, since the group of units of $\OCI_n$ is trivial 
(the identity is the only order-preserving permutation), then any set of generators of $\OCI_n$ must contain at least one element of each possible image of size $n-1$. 
As we have elements of $\OCI_n$ with all $n$ possible distinct images of size $n-1$ 
(for instance the partial identities $e_1,\ldots,e_n$), it follows that any set of generators of $\OCI_n$ must contain at least $n$ elements. 
Therefore, we conclude that: 

\begin{theorem}
For $n\geqslant1$, the monoid $\OCI_n$ has rank $n$.
\end{theorem}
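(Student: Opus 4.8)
The plan is to establish matching upper and lower bounds, both equal to $n$. For the upper bound I would exhibit an explicit generating set of size $n$. Relying on the description obtained above, that every element of $\OCI_n$ is a restriction of $\id_n$, of some $x^k$, or of some $y^k$ with $1\leqslant k\leqslant n-1$, the key point is that passing from a transformation to one of its restrictions is achieved by pre-multiplying by a suitable product of the partial identities $e_i$ (since $\id_D\,\alpha$ is exactly the restriction of $\alpha$ to $D$, and $\id_D=\Pi_{i\notin D}e_i$). Hence $\{x,y,e_1,\ldots,e_n\}$ generates $\OCI_n$; and because $e_1=yx$ and $e_n=xy$, the two extremal idempotents are redundant, leaving the set $\{x,y,e_2,\ldots,e_{n-1}\}$ of cardinality $n$. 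This shows the rank is at most $n$.

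The heart of the argument, and the step I expect to be the main obstacle, is the lower bound. Here one exploits that the group of units of $\OCI_n$ is trivial, the identity being the only order-preserving permutation of the chain $\Omega_n$. I would concentrate on the elements whose image has size $n-1$. In any factorisation $\gamma=w_1\cdots w_m$ inside $\OCI_n$ one has $\im(\gamma)\subseteq\im(w_m)$, so if $|\im(\gamma)|=n-1$ then $|\im(w_m)|\geqslant n-1$; when $|\im(w_m)|=n$ the factor $w_m$ is a unit, hence the identity, and may be deleted, whereas when $|\im(w_m)|=n-1$ the inclusion forces $\im(w_m)=\im(\gamma)$. Iterating this reduction, every generating set of $\OCI_n$ must already contain a generator whose image is exactly $\im(\gamma)$.

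It then remains to count the available images. There are precisely $n$ subsets of $\Omega_n$ of size $n-1$, namely $\Omega_n\setminus\{j\}$ for $j=1,\ldots,n$, and each is realised in $\OCI_n$, for instance by the partial identity $e_j=\id_{\Omega_n\setminus\{j\}}$. Since a single partial transformation has a single image, these $n$ distinct images demand $n$ distinct generators, so the rank is at least $n$; combined with the upper bound this gives rank exactly $n$. It is worth noting why this contrasts so sharply with the rank $2$ of $\CI_n$: there the cyclic unit group $\C_n$ permutes the $n$ images of size $n-1$ transitively, so the single idempotent $e_1$ together with $g$ suffices, while in $\OCI_n$ the collapse of the unit group leaves no nontrivial element to move between these images, forcing one generator per image.
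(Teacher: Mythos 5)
Your proof is correct and follows essentially the same route as the paper: the upper bound via the generating set $\{x,y,e_2,\ldots,e_{n-1}\}$ (using $e_1=yx$, $e_n=xy$ and the description of elements of $\OCI_n$ as restrictions of $\id_n$, $x^k$, $y^k$), and the lower bound from the triviality of the unit group forcing one generator for each of the $n$ images of size $n-1$. Your factorisation argument (looking at the last factor $w_m$, deleting identity factors, and forcing $\im(w_m)=\im(\gamma)$) merely spells out the details the paper leaves implicit in its one-line justification.
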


Notice that $\OCI_1=\CI_1=\I_1$. 

\section{Presentations}\label{presentation}

In this section, we aim to determine presentations for $\CI_n$ and $\OCI_n$. 

We begin by determining a presentation of $\CI_n$ on $n+1$ generators and then, 
by applying applying \textit{Tietze transformations}, we deduce a presentation for $\CI_n$ on $2$ generators. 

\smallskip

At this point, we recall some basic notions and results related to the concept of a monoid presentation.

\smallskip 

Let $A$ be an alphabet and consider the free monoid $A^*$ generated by $A$. 
The elements of $A$ and of $A^*$ are called \textit{letters} and \textit{words}, respectively. 
The empty word is denoted by $1$. 
A pair $(u,v)$ of $A^*\times A^*$ is called a
\textit{relation} of $A^*$ and it is usually represented by $u=v$. 
A relation $u=v$ of $A^*$ is said to be a \textit{consequence} of $R$ if $u{\hspace{.11em}}\rho_R{\hspace{.11em}}v$. 
Let $X$ be a generating set of $M$ and let $\phi: A\longrightarrow M$ be an injective mapping 
such that $A\phi=X$. 
Let $\varphi: A^*\longrightarrow M$ be the (surjective) homomorphism of monoids that extends $\phi$ to $A^*$. 
We say that $X$ satisfies (via $\varphi$) a relation $u=v$ of $A^*$ if $u\varphi=v\varphi$. 
For more details see
\cite{Lallement:1979} or \cite{Ruskuc:1995}. 
A direct method to find a presentation for a monoid
is described by the following well-known result (e.g.  see \cite[Proposition 1.2.3]{Ruskuc:1995}).  

\begin{proposition}\label{provingpresentation} 
Let $M$ be a monoid generated by a set $X$, let $A$ be an alphabet 
and let $\phi: A\longrightarrow M$ be an injective mapping 
such that $A\phi=X$. 
Let $\varphi:A^*\longrightarrow M$ be the (surjective) homomorphism 
that extends $\phi$ to $A^*$ and let $R\subseteq A^*\times A^*$.
Then $\langle A\mid R\rangle$ is a presentation for $M$ if and only
if the following two conditions are satisfied:
\begin{enumerate}
\item
The generating set $X$ of $M$ satisfies (via $\varphi$) all the relations from $R$;  
\item 
If $u,v\in A^*$ are any two words such that 
the generating set $X$ of $M$ satisfies (via $\varphi$) the relation $u=v$ then $u=v$ is a consequence of $R$.
\end{enumerate} 
\end{proposition}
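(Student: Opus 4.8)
The plan is to recast both conditions as inclusions between two congruences on the free monoid $A^*$: the congruence $\rho_R$ generated by $R$, and the \emph{kernel congruence}
$$
\ker\varphi=\{(u,v)\in A^*\times A^*\mid u\varphi=v\varphi\}
$$
of the surjective homomorphism $\varphi$. Since $\varphi$ is a monoid homomorphism, $\ker\varphi$ is indeed a congruence on $A^*$, and since $\varphi$ is onto, the fundamental homomorphism theorem (see \cite{Howie:1995}) yields $A^*/\ker\varphi\cong M$. Thus the whole statement follows once I show that conditions (1) and (2) together are equivalent to the equality $\rho_R=\ker\varphi$.

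First I would translate condition (1). To say that $X$ satisfies (via $\varphi$) every relation of $R$ is precisely to say $R\subseteq\ker\varphi$. As $\ker\varphi$ is a congruence containing $R$ and $\rho_R$ is by definition the smallest congruence containing $R$, this is equivalent to $\rho_R\subseteq\ker\varphi$, the reverse implication being immediate from $R\subseteq\rho_R$. Next I would translate condition (2): it asserts that whenever $u\varphi=v\varphi$ one has $u\,\rho_R\,v$, which is literally the inclusion $\ker\varphi\subseteq\rho_R$. Combining the two translations gives that conditions (1) and (2) hold if and only if $\rho_R=\ker\varphi$.

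It then remains to link $\rho_R=\ker\varphi$ with $\langle A\mid R\rangle$ being a presentation of $M$. The natural device is the homomorphism $\bar\varphi\colon A^*/\rho_R\to M$ induced by $\varphi$: condition (1), i.e. $\rho_R\subseteq\ker\varphi$, is exactly what makes $\bar\varphi$ well defined; $\varphi$ onto makes $\bar\varphi$ onto; and condition (2), i.e. $\ker\varphi\subseteq\rho_R$, is exactly what makes $\bar\varphi$ injective. Hence $\bar\varphi$ is an isomorphism if and only if $\rho_R=\ker\varphi$, and in that case $A^*/\rho_R\cong M$, so $\langle A\mid R\rangle$ presents $M$; conversely, reading the presentation through the induced map $\bar\varphi$ recovers $\rho_R=\ker\varphi$ and hence both conditions.

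I expect the only genuinely delicate point to be this last linkage. Abstractly, $A^*/\rho_R\cong M$ only guarantees \emph{some} isomorphism, and an arbitrary isomorphism need not respect the labelling of the generators by $A$; so the equality $\rho_R=\ker\varphi$ has to be extracted from the \emph{canonical} map $\bar\varphi$ attached to the fixed surjection $\varphi$, rather than from an abstract isomorphism. Keeping $\varphi$ fixed throughout (equivalently, the generating set $X=A\phi$ together with its labelling $\phi$) and phrasing the whole equivalence in terms of $\bar\varphi$ is what makes the argument go through cleanly.
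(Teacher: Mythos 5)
The paper offers no proof of this proposition: it is quoted as a well-known result, with a pointer to \cite[Proposition 1.2.3]{Ruskuc:1995}, so there is nothing internal to compare your argument against. On its own merits your proof is correct and is the standard one: condition (1) amounts to the inclusion $\rho_R\subseteq\ker\varphi$ (by minimality of $\rho_R$ among congruences containing $R$), condition (2) is literally the inclusion $\ker\varphi\subseteq\rho_R$, and the canonical map $\bar\varphi\colon A^*/\rho_R\longrightarrow M$ induced by the fixed surjection $\varphi$ is well defined, surjective, and injective exactly under these respective hypotheses, hence an isomorphism if and only if $\rho_R=\ker\varphi$. The caveat you raise at the end is not pedantry but a genuine necessity: under the paper's literal definition (``$M$ is isomorphic to $A^*/\rho_R$'' by some unspecified isomorphism), the ``only if'' direction is in fact false. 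For instance, take $A=\{a,b\}$, let $M=\{1,e\}$ be the two-element semilattice, let $\phi\colon a\mapsto e$, $b\mapsto 1$, and let $R=\{(b^2,b),\,(a,1)\}$; then $A^*/\rho_R\cong M$, yet $X$ does not satisfy the relation $a=1$ via $\varphi$, since $a\varphi=e\neq 1$. So the equivalence is true only when ``$\langle A\mid R\rangle$ is a presentation for $M$'' is read, as you do, through the canonical $\bar\varphi$ attached to the fixed $\varphi$ --- which is how Ru\v{s}kuc formulates it, and is also the only way the present paper uses the proposition (Theorems \ref{firstpres} and \ref{firstpresoci} invoke only the ``if'' direction, which is unproblematic under either reading).
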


\smallskip

Given a presentation for a monoid, another method to find a new
presentation consists in applying Tietze transformations. For a
monoid presentation $\langle A\mid R\rangle$, the 
four \textit{elementary Tietze transformations} are:

\begin{description}
\item(T1)
Adding a new relation $u=v$ to $\langle A\mid R\rangle$,
provided that $u=v$ is a consequence of $R$;
\item(T2)
Deleting a relation $u=v$ from $\langle A\mid R\rangle$,
provided that $u=v$ is a consequence of $R\backslash\{u=v\}$;
\item(T3)
Adding a new generating symbol $b$ and a new relation $b=w$, where
$w\in A^*$;
\item(T4)
If $\langle A\mid R\rangle$ possesses a relation of the form
$b=w$, where $b\in A$, and $w\in(A\backslash\{b\})^*$, then
deleting $b$ from the list of generating symbols, deleting the
relation $b=w$, and replacing all remaining appearances of $b$ by
$w$.
\end{description}

The next result is well-known (e.g. see \cite{Ruskuc:1995}): 

\begin{proposition} \label{tietze}
Two finite presentations define the same monoid if and only if one
can be obtained from the other by a finite number of elementary
Tietze transformations $(T1)$, $(T2)$, $(T3)$ and $(T4)$.  
\end{proposition}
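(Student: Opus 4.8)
The plan is to prove the two implications of the equivalence separately, the forward direction (\emph{soundness}) being routine and the converse (\emph{completeness}) carrying all the content.

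For soundness I would verify that each of the four elementary transformations preserves the defined monoid, and then compose. For (T1) and (T2) this is immediate from the definition of a consequence: if $u=v$ is a consequence of $R$, then $u\,\rho_R\,v$, so $\rho_{R\cup\{u=v\}}=\rho_R$ and the quotient $A^*/\rho_R$ is unchanged. For (T3) and (T4), which are mutually inverse, I would exhibit the isomorphism explicitly: when a symbol $b$ and a relation $b=w$ with $w\in A^*$ are adjoined, the assignment fixing $A$ and sending $b$ to the $\rho_R$-class of $w$ induces an isomorphism from $(A\cup\{b\})^*/\rho_{R\cup\{b=w\}}$ onto $A^*/\rho_R$. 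Since a finite sequence of transformations is a composition of such monoid isomorphisms, any two presentations linked by finitely many Tietze transformations define isomorphic monoids.

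For completeness, suppose $\langle A\mid R\rangle$ and $\langle B\mid S\rangle$ both define $M$, fix an isomorphism $\theta$ from $A^*/\rho_R$ onto $B^*/\rho_S$, and write $\varphi\colon A^*\to A^*/\rho_R$ and $\psi\colon B^*\to B^*/\rho_S$ for the canonical projections. After renaming the symbols of $B$ (itself realizable by a (T3) step followed by a (T4) step, and harmless by transitivity of Tietze-equivalence) I may assume $A\cap B=\emptyset$. For each $a\in A$ I choose a word $\beta_a\in B^*$ with $\psi(\beta_a)=\theta(\varphi(a))$, and for each $b\in B$ a word $\alpha_b\in A^*$ with $\varphi(\alpha_b)=\theta^{-1}(\psi(b))$. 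The construction then proceeds in four blocks. First, I apply (T3) once for each $b\in B$ to adjoin $b$ with the relation $b=\alpha_b$, reaching $\langle A\cup B\mid R\cup R'\rangle$ where $R'=\{b=\alpha_b\mid b\in B\}$; this presentation still defines $M$, and in it the image of a $B$-word $w$ is $\theta^{-1}(\psi(w))$. Second, I use (T1) to add every relation of $S$ (read over $A\cup B$) together with every relation $a=\beta_a$ for $a\in A$: each holds in $M$, since the image of $\beta_a$ is $\theta^{-1}(\psi(\beta_a))=\theta^{-1}(\theta(\varphi(a)))=\varphi(a)$ and the image of the two sides of any relation of $S$ agree, so each is a consequence. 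Third, I delete every $a\in A$ by (T4) using $a=\beta_a$, which is legitimate because $\beta_a\in B^*$ contains no letter of $A$, so the eliminations may be carried out in any order and each substitution introduces only $B$-letters. Finally, the relations descended from $R$, from $R'$, and from the $a=\beta_a$ become, after substituting $\beta_a$ for each $a$, words over $B^*$ that are consequences of $S$ alone, so I remove them all by (T2), arriving precisely at $\langle B\mid S\rangle$.

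The main obstacle, and the only place where genuine care is needed, is to justify at each (T1) and (T2) step that the relation being added or deleted is a consequence of the current relation set. All of these checks collapse to a single identity: letting $\sigma\colon(A\cup B)^*\to B^*$ be the substitution $a\mapsto\beta_a$, $b\mapsto b$, and $\chi\colon(A\cup B)^*\to M$ the evaluation $a\mapsto\varphi(a)$, $b\mapsto\theta^{-1}(\psi(b))$ used above, one has $\psi\circ\sigma=\theta\circ\chi$ because the two homomorphisms agree on the generators $A\cup B$. Thus every relation valid in $M$ becomes, under $\sigma$, a relation valid in $B^*/\rho_S$, which is exactly what the final (T2) block requires (and the symmetric identity, with $\alpha_b$ and $\theta^{-1}\circ\psi$, underlies the earlier steps). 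The finiteness hypothesis on the two presentations is what guarantees that the numbers of (T3), (T1), (T4) and (T2) steps are all finite, which completes the equivalence.
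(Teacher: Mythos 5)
The paper does not actually prove this proposition---it is stated as well known, with a citation to Ru\v{s}kuc's thesis---so there is no in-paper argument to compare against; your proof is precisely the standard one from that literature (soundness of each of the four moves via explicit isomorphisms, then the symmetric completeness construction: adjoin the alphabet $B$ with relations $b=\alpha_b$ by (T3), transfer $S$ and the relations $a=\beta_a$ by (T1), eliminate $A$ by (T4), and delete the residual relations by (T2)), and it is correct. The only glosses are harmless: renaming a letter needs a (T1) step inserted between the (T3) and (T4) you invoke (the relation added by (T3) has the form $b'=b$, while (T4) formally requires $b=b'$), and the inference ``holds in $M$, hence is a consequence'' relies on the intermediate presentation presenting $M$ via the specific evaluation map $\chi$, which your identity $\psi\circ\sigma=\theta\circ\chi$ and the (T3)-soundness step do establish.
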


\medskip 

Now, consider the alphabet $A=\{g,e_1,e_2,\ldots,e_n\}$ and the set $R$ formed by the following monoid relations: 
\begin{description}
\item $(R_1)$ $g^n=1$; 

\item $(R_2)$ $e_i^2=e_i$, for $1\leqslant i\leqslant n$;

\item $(R_3)$ $e_ie_j=e_je_i$, for $1\leqslant i<j\leqslant n$; 

\item $(R_4)$ $ge_1=e_ng$ and $ge_{i+1}=e_ig$, for $1\leqslant i\leqslant n-1$; 

\item $(R_5)$ $ge_1e_2\cdots e_n=e_1e_2\cdots e_n$.
\end{description}

Observe that $|R|=\frac{1}{2}(n^2+3n+4)$. 

\smallskip 

We aim to show that the monoid $\CI_n$ is defined by the presentation $\langle A \mid R\rangle$. 

\smallskip 

Let $\phi:A\longrightarrow \CI_n$ be the mapping defined by
$$
g\phi=g ,\quad e_i\phi=e_i, \mbox{~for $1\leqslant i\leqslant n$}, 
$$
and let $\varphi:A^*\longrightarrow \CI_n$ be the homomorphism of monoids that extends $\phi$ to $A^*$. 
Notice that we are using the same symbols for the letters of the alphabet $A$ and for the generating set of $\CI_n$,
which simplifies notation and, within the context, will not cause ambiguity. 

\smallskip 

It is a routine matter to check the following lemma. 

\begin{lemma}\label{genrel}
The set of generators $\{g,e_1,e_2,\ldots,e_n\}$ of $\CI_n$ satisfies (via $\varphi$) all the relations from $R$.
\end{lemma}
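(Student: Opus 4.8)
The plan is to verify each relation family by direct computation in $\CI_n$, using the explicit action $ig^k$ recorded at the start of Section \ref{basics} and the concrete descriptions of the generators $g$ and $e_i=\id_{\Omega_n\setminus\{i\}}$. Since Lemma \ref{genrel} only asserts that the generating set satisfies the relations (the ``easy'' direction, i.e. condition (1) of Proposition \ref{provingpresentation}), the entire task reduces to checking that $g\varphi=g$ and $e_i\varphi=e_i$ turn each listed relation into a genuine identity of partial permutations. I would organize the verification relation-block by relation-block.

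First I would dispatch the idempotent-related families. Relation $(R_1)$, $g^n=1$, is immediate because $g$ has order $n$ in the group $\C_n$. Relations $(R_2)$ and $(R_3)$ follow instantly from the observations already made in the text: for any $X\subseteq\Omega_n$ one has $\Pi_{i\in X}e_i=\id_{\Omega_n\setminus X}$, so $e_i^2=\id_{\Omega_n\setminus\{i\}}=e_i$ and $e_ie_j=\id_{\Omega_n\setminus\{i,j\}}=e_je_i$. Relation $(R_5)$ is equally quick: $e_1e_2\cdots e_n=\id_{\Omega_n\setminus\Omega_n}=\id_\emptyset=\emptyset$ is the empty partial permutation, and left-multiplying the empty map by $g$ again yields the empty map, so $ge_1\cdots e_n=e_1\cdots e_n$ holds trivially.

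The only block requiring genuine (but still routine) care is $(R_4)$, the conjugation-type relations describing how $g$ pushes the partial identities around. The underlying fact is that for a permutation $\sigma$ and a partial identity $\id_X$ one has $\sigma\,\id_X=\id_{X\sigma^{-1}}\,\sigma$ (and the analogous statement with the roles swapped), i.e. moving a partial identity past $g$ merely relabels its domain via the action of $g$. Concretely, since $ge_{i+1}$ restricts the domain of $g$ to the elements mapped by $g$ into $\Omega_n\setminus\{i+1\}$, and $e_ig$ restricts the domain to $\Omega_n\setminus\{i\}$, I would check that $jg=i+1$ precisely when $j=i$ (for $1\leqslant i\leqslant n-1$) and that $jg=1$ precisely when $j=n$; both follow from the displayed formula for $ig^k$ with $k=1$. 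This matches the domain and the image on both sides of $ge_1=e_ng$ and $ge_{i+1}=e_ig$, completing the verification.

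The main obstacle is essentially bookkeeping rather than conceptual: one must be careful with the cyclic wrap-around in $(R_4)$, keeping track of which index gets sent to which so that the single ``special'' relation $ge_1=e_ng$ and the generic relations $ge_{i+1}=e_ig$ are matched to the correct positions. Beyond that, there is nothing deep to overcome, which is exactly why the text flags this as ``a routine matter''.
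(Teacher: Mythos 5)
Your verification is correct and is exactly the routine check the paper has in mind (the paper omits the proof, declaring it "a routine matter"): each relation family is confirmed by direct computation with partial identities, the key point being the conjugation identity $\sigma\,\id_X=\id_{X\sigma^{-1}}\sigma$ handling $(R_4)$ and the empty map handling $(R_5)$. No gaps; your bookkeeping for the cyclic wrap-around in $(R_4)$ matches the paper's convention of right actions.
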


This lemma assures us that, if $u,v\in A^*$ are such that the relation $u=v$ is a consequence of $R$, 
then $u\varphi=v\varphi$. 

\smallskip 

Next, in order to prove that any relation satisfied by the generating set $\{g,e_1,e_2,\ldots,e_n\}$ of $\CI_n$ is a consequence of $R$, 
we first present two lemmas whose proofs are routine. 

\begin{lemma}\label{pre1} 
Let $u\in A^*$. Then, there exist $m\in\{0,1,\ldots,n-1\}$, $1\leqslant i_1 < \cdots < i_k\leqslant n$ and $0\leqslant k\leqslant n$ such that 
the relation $u=g^m e_{i_1}\cdots e_{i_k}$ is a consequence of relations $R_1$ to $R_4$. 
\end{lemma}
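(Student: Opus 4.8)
The plan is to use the relations $R_1$ through $R_4$ as a rewriting system that pushes every generator into a canonical shape, namely a power of $g$ on the left followed by a product of distinct $e$'s in increasing index order. I would argue by induction on the length of the word $u\in A^*$. The empty word is already of the required form (take $m=0$ and $k=0$), so the base case is immediate. For the inductive step, I would write $u=u'a$ where $a$ is the last letter and, by the induction hypothesis, assume that $u'$ equals a word of the form $g^m e_{i_1}\cdots e_{i_k}$ modulo $R_1$ to $R_4$.

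The two cases to handle are $a=g$ and $a=e_j$. When $a=e_j$, I would use relations $R_2$ and $R_3$ to insert $e_j$ into the product $e_{i_1}\cdots e_{i_k}$: the commutation relations $R_3$ let me move $e_j$ leftward past every $e_{i_\ell}$ with $i_\ell>j$ until it reaches its correct sorted position, and if $j$ already appears among the $i_\ell$ then the idempotent relation $R_2$ (in the form $e_j e_j=e_j$, obtained after commuting $e_j$ next to its duplicate) absorbs it; in either case the result is again a strictly increasing product of $e$'s prefixed by $g^m$. When $a=g$, the task is to move the newly appended $g$ from the right end all the way to the left past the block $e_{i_1}\cdots e_{i_k}$. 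Here I would invoke $R_4$, which in its two forms $ge_1=e_ng$ and $ge_{i+1}=e_ig$ lets a single $g$ commute past any $e_i$ at the cost of shifting the index cyclically by one. Applying $R_4$ repeatedly, the trailing $g$ passes through the entire product, transforming $e_{i_1}\cdots e_{i_k}g$ into $g\,e_{j_1}\cdots e_{j_k}$ where each $j_\ell$ is obtained from $i_\ell$ by the cyclic predecessor map; this new $g$ merges with $g^m$ to give $g^{m+1}$, and finally $R_1$ reduces the exponent modulo $n$ so that it lies in $\{0,1,\ldots,n-1\}$. One must then re-sort the shifted indices into increasing order, which again is accomplished purely by the commutation relations $R_3$, returning to the canonical form.

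The main obstacle I anticipate is bookkeeping in the $a=g$ case: after commuting $g$ past the block, the cyclic shift of indices need not preserve the sorted order (for instance, the index $1$ becomes $n$, which is now the largest), so the resulting product of $e$'s may be out of order and must be re-sorted. I would emphasize that this re-sorting is legitimate because $R_3$ makes the $e$'s commute freely, so the specific permutation of indices produced by the shift is irrelevant to the final canonical form. A secondary point of care is that $R_4$ only provides the commutation of a \emph{single} $g$ past a single $e$; to move $g^m$ (rather than one $g$) one would iterate, but in the inductive framework I only ever move one $g$ at a time, which keeps the argument clean.

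Since all rewriting steps invoke only $R_1$ through $R_4$ and never $R_5$, the relation $u=g^m e_{i_1}\cdots e_{i_k}$ is a consequence of $R_1$ to $R_4$ alone, with $0\leqslant m\leqslant n-1$, $0\leqslant k\leqslant n$, and $1\leqslant i_1<\cdots<i_k\leqslant n$, as required. Because the whole argument is a straightforward induction driven by these four relation families, I would present it as routine and relegate the detailed case analysis to the reader, matching the paper's stated intent.
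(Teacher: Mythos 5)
Your proof is correct and is precisely the routine rewriting/induction argument the paper has in mind (the paper states this lemma without proof, calling it routine): reduce by length, sort and absorb the $e$'s via $R_3$ and $R_2$, commute a trailing $g$ leftward through the block via $R_4$ with a cyclic index shift, then re-sort and reduce the exponent via $R_1$. One small bookkeeping slip that does not affect validity: reading $R_4$ as $e_i g = g e_{i+1}$ and $e_n g = g e_1$, moving $g$ leftward shifts indices by the cyclic \emph{successor} map (so $n$ becomes $1$), not the predecessor map as you state.
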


\begin{lemma}\label{pre2} 
For all $m\in\N$, 
the relation $g^me_1e_2\cdots e_n=e_1e_2\cdots e_n$
is a consequence of $R_5$. 
\end{lemma}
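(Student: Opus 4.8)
The plan is to argue by induction on $m$, using that a relation is a consequence of $R_5$ precisely when its two sides are related by $\rho_{R_5}$, the smallest congruence on $A^*$ containing the single relation $R_5$; being a congruence, $\rho_{R_5}$ is in particular compatible with multiplication. Throughout, write $w=e_1e_2\cdots e_n$, so that $R_5$ is exactly the relation $gw=w$, i.e.\ $gw\,\rho_{R_5}\,w$.

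For the base case $m=1$ the relation $g^1w=w$ coincides with $R_5$ and is therefore trivially a consequence of $R_5$. (If one adopts the convention $0\in\N$, the case $m=0$ is $w=w$, a consequence of $R_5$ by reflexivity of $\rho_{R_5}$.)

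For the inductive step, assume $g^mw=w$ is a consequence of $R_5$, that is, $g^mw\,\rho_{R_5}\,w$. Since $\rho_{R_5}$ is compatible with left multiplication, multiplying both sides by $g$ on the left gives $g^{m+1}w\,\rho_{R_5}\,gw$. Combining this with $gw\,\rho_{R_5}\,w$ and invoking transitivity of $\rho_{R_5}$ yields $g^{m+1}w\,\rho_{R_5}\,w$, that is, $g^{m+1}w=w$ is a consequence of $R_5$. This completes the induction.

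I do not anticipate a genuine obstacle. The only point requiring a little care is to carry out the induction at the level of the congruence $\rho_{R_5}$ --- appealing to its compatibility with left multiplication and to transitivity --- rather than attempting to rewrite the word $g^mw$ term by term using $R_5$ as a one-directional rewriting rule. Reasoning through the congruence is exactly what makes the argument immediate and the proof routine.
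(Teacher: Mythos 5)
Your proof is correct and is exactly the routine induction the paper has in mind: the paper states Lemma~\ref{pre2} without proof (declaring it routine), and the intended argument is precisely your induction on $m$, using left compatibility and transitivity of the congruence $\rho_{R_5}$ together with the base relation $ge_1e_2\cdots e_n = e_1e_2\cdots e_n$. Your care in treating the relation through the congruence rather than as a one-directional rewriting rule is the right way to make the argument precise.
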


Now, we may prove the following result.

\begin{theorem}\label{firstpres} 
The monoid $\CI_n$ is defined by the presentation $\langle A \mid R\rangle$ on $n+1$ generators and $\frac{1}{2}(n^2+3n+4)$ relations. 
\end{theorem}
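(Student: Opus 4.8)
The plan is to apply Proposition~\ref{provingpresentation} with the generating set $X=\{g,e_1,\ldots,e_n\}$ of $\CI_n$, the alphabet $A$, and the homomorphism $\varphi$. Condition~(1) is already established by Lemma~\ref{genrel}. So the entire task reduces to verifying condition~(2): whenever two words $u,v\in A^*$ satisfy $u\varphi=v\varphi$ in $\CI_n$, the relation $u=v$ must be a consequence of $R$.

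The main strategy for condition~(2) is to exhibit a \emph{canonical form} for words of $A^*$ modulo $\rho_R$ that is in bijective correspondence with the elements of $\CI_n$. First I would use Lemma~\ref{pre1} to reduce every word $u\in A^*$, modulo the consequences of $R_1$--$R_4$, to a normal-form word $g^m e_{i_1}\cdots e_{i_k}$ with $0\leqslant m\leqslant n-1$, $0\leqslant k\leqslant n$ and $1\leqslant i_1<\cdots<i_k\leqslant n$. The key point is that such a word maps under $\varphi$ to $g^m|_{\Omega_n\setminus\{i_1,\ldots,i_k\}}$, a specific restriction of a power of $g$. By Lemma~\ref{unique}, every nonempty element of $\CI_n$ is a restriction of exactly one power of $g$, so these data $(m,\{i_1,\ldots,i_k\})$ determine the element of $\CI_n$ essentially uniquely, the only ambiguity being the empty map.

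The crux of the argument, and the step I expect to be the main obstacle, is handling precisely that ambiguity: the empty transformation $\emptyset$ is the unique element with empty domain, yet it is the image of the words $g^m e_1 e_2\cdots e_n$ for \emph{every} $m$, since $e_1\cdots e_n=\id_{\emptyset}=\emptyset$. These distinct normal forms must all be collapsed, and this is exactly what relation $R_5$ accomplishes: Lemma~\ref{pre2} shows that $g^m e_1\cdots e_n = e_1\cdots e_n$ is a consequence of $R_5$ for all $m$. Thus I would argue as follows. Suppose $u,v\in A^*$ with $u\varphi=v\varphi$. By Lemma~\ref{pre1}, $u$ and $v$ are respectively consequences of $R_1$--$R_4$ of normal forms $g^{m}e_{i_1}\cdots e_{i_k}$ and $g^{m'}e_{j_1}\cdots e_{j_\ell}$. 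Applying $\varphi$, these two restrictions of powers of $g$ coincide. If the common image is nonempty, then Lemma~\ref{unique} forces the domains to agree, so $\{i_1,\ldots,i_k\}=\{j_1,\ldots,j_\ell\}$, and on this common nonempty domain the two powers of $g$ agree, which by the uniqueness of the extension forces $m=m'$; hence the two normal forms are identical words and $u=v$ is a consequence of $R_1$--$R_4$, therefore of $R$. If instead the common image is empty, then $\{i_1,\ldots,i_k\}=\{j_1,\ldots,j_\ell\}=\Omega_n$, so both normal forms are $g^m e_1\cdots e_n$ and $g^{m'}e_1\cdots e_n$; by Lemma~\ref{pre2} each is a consequence of $R_5$ of $e_1\cdots e_n$, whence $u=v$ is a consequence of $R$.

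Having verified both conditions of Proposition~\ref{provingpresentation}, I conclude that $\langle A\mid R\rangle$ is a presentation for $\CI_n$. The generator count is $|A|=n+1$ and the relation count was already observed to be $|R|=\frac{1}{2}(n^2+3n+4)$, completing the proof. The delicate part throughout is the bookkeeping for the empty-domain case; the nonempty case is a direct appeal to Lemma~\ref{unique}, while the empty case is precisely the reason relation $R_5$ is included in $R$.
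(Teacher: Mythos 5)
Your proposal is correct and follows essentially the same route as the paper's proof: both verify condition (2) of Proposition~\ref{provingpresentation} by reducing $u$ and $v$ to the normal forms of Lemma~\ref{pre1}, observing that the image determines the index set $\{i_1,\ldots,i_k\}$, invoking Lemma~\ref{unique} for the nonempty case to get $m=m'$, and using Lemma~\ref{pre2} (relation $R_5$) to collapse the words $g^m e_1\cdots e_n$ in the empty-map case. Your emphasis on the empty transformation as the sole source of ambiguity, and as the reason $R_5$ is needed, matches the paper's argument exactly.
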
 
\begin{proof} Taking into account Proposition \ref{provingpresentation} and Lemma \ref{genrel}, 
it remains to prove that any relation satisfied by the generating set $\{g,e_1,e_2,\ldots,e_n\}$ of $\CI_n$ is a consequence of $R$. 

Let $u,v\in A^*$ be such that $u\varphi=v\varphi$. We aim to show that $u\ro v$. 

By Lemma \ref{pre1}, there exist $m\in\{0,1,\ldots,n-1\}$, $1\leqslant i_1 < \cdots < i_k\leqslant n$ and $0\leqslant k\leqslant n$ 
such that $u\ro g^m e_{i_1}\cdots e_{i_k}$ and 
$m'\in\{0,1,\ldots,n-1\}$, $1\leqslant i'_1 < \cdots < i'_{k'}\leqslant n$ and $0\leqslant k'\leqslant n$ 
such that $v\ro g^{m'} e_{i'_1}\cdots e_{i'_k}$. 

Take $\alpha=u\varphi$. 
Since $\alpha=g^m e_{i_1}\cdots e_{i_k}$, it follows that $\im(\alpha)=\Omega_n\setminus\{i_1,\ldots,i_k\}$ and 
$\alpha=g^m|_{\dom(\alpha)}$. Similarly, as also $\alpha=v\varphi$, 
from $\alpha= g^{m'} e_{i'_1}\cdots e_{i'_k}$, 
we get $\im(\alpha)=\Omega_n\setminus\{i'_1,\ldots,i'_{k'}\}$ and $\alpha= g^{m'}|_{\dom(\alpha)}$. 
Hence $k'=k$ and $\{i'_1,\ldots,i'_k\}=\{i_1,\ldots,i_k\}$. 

If $\alpha\neq\emptyset$ then, by Lemma \ref{unique}, $m=m'$ and so $u\ro v$.  
On the other hand, 
if $\alpha=\emptyset$, i.e. $k=n$, then 
$u\ro g^m e_1e_2\cdots e_n \ro e_1e_2\cdots e_n \ro g^{m'} e_1e_2\cdots e_n \ro v$, 
by Lemma \ref{pre2}, as required. 
\end{proof}

Next, by using Tietze transformations and applying Proposition \ref{tietze}, we deduce from the previous presentation for $\CI_n$ 
a new one on the $2$-generators set $\{g,e_1\}$ of $\CI_n$.  

\smallskip 

Recall that $e_i=g^{n-i+1}e_1g^{i-1}$ for all $i\in\{1,2,\ldots,n\}$. 

\smallskip 

We will proceed as follows: first, by applying T1, we add the relations $e_i=g^{n-i+1}e_1g^{i-1}$, for $2\leqslant i\leqslant n$; 
secondly, we apply T4 to each of the relations $e_i=g^{n-i+1}e_1g^{i-1}$ with $i\in\{2,3,\ldots,n\}$; 
finally, by using the relation $R_1$, we simplify the new relations obtained, eliminating the trivial ones or those that are deduced from others. 
In what follows, we perform this procedure for each of the sets of relations $R_1$ to $R_5$. 
\begin{description}
\item $(R_1)$ 
There is nothing to do for this relation. 

\item $(R_2)$ 
For $2\leqslant i\leqslant n$, from $e_i^2=e_i$, we have 
$$
g^{n-i+1}e_1g^{i-1} g^{n-i+1}e_1g^{i-1} = g^{n-i+1}e_1g^{i-1},
$$
which is equivalent to $e_1^2=e_1$. 

\item $(R_3)$ 
For $1\leqslant i<j\leqslant n$, from $e_ie_j=e_je_i$, we get 
$$
g^{n-i+1}e_1g^{i-1} g^{n-j+1}e_1g^{j-1} = g^{n-j+1}e_1g^{j-1} g^{n-i+1}e_1g^{i-1} 
$$
and this relation is equivalent to $e_1g^{n-j+i}e_1g^{n-i+j} = g^{n-j+i}e_1g^{n-i+j}e_1$. 

\item $(R_4)$  
From $ge_1=e_ng$, we obtain 
$$
ge_1=ge_1g^{n-1}g,
$$
which is equivalent to $e_1=e_1$. 
On the other hand, for $1\leqslant i\leqslant n-1$, 
from $ge_{i+1}=e_ig$ we get 
$$
g g^{n-i}e_1g^{i}= g^{n-i+1}e_1g^{i-1} g 
$$
and this relation is equivalent to $e_1=e_1$. 

\item $(R_5)$ 
Finally, from $ge_1e_2\cdots e_n=e_1e_2\cdots e_n$ we get 
$$
g e_1 (g^{n-1}e_1g)(g^{n-2}e_1g^{2})\cdots(ge_1g^{n-1}) = e_1 (g^{n-1}e_1g)(g^{n-2}e_1g^{2})\cdots(ge_1g^{n-1}), 
$$
i.e. the relation $g(e_1g^{n-1})^n=(e_1g^{n-1})^n$. 
\end{description}

\smallskip 

Therefore, let us consider the following set $Q$ of monoid relations on the alphabet $B=\{g,e\}$: 
\begin{description}
\item $(Q_1)$ $g^n=1$; 

\item $(Q_2)$ $e^2=e$; 

\item $(Q_3)$ $eg^{n-j+i}eg^{n-i+j} = g^{n-j+i}eg^{n-i+j}e$, for $1\leqslant i<j\leqslant n$; 

\item $(Q_4)$ $g(eg^{n-1})^n=(eg^{n-1})^n$.  

\end{description}
Notice that 
$|Q|=\frac{1}{2}(n^2-n+6)$. 

Thus, by considering the mapping $B\longrightarrow\CI_n$ defined by $g\longmapsto g$ and $e\longmapsto e_1$, 
we have: 

\begin{theorem}\label{rankpres} 
The monoid $\CI_n$ is defined by the presentation $\langle B \mid Q\rangle$ on $2$ generators and $\frac{1}{2}(n^2-n+6)$ relations. 
\end{theorem}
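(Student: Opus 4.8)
The plan is to obtain the presentation $\langle B\mid Q\rangle$ from the already-established presentation $\langle A\mid R\rangle$ of Theorem~\ref{firstpres} by a finite sequence of elementary Tietze transformations, invoking Proposition~\ref{tietze} to conclude that both presentations define the same monoid $\CI_n$. Since $\CI_n$ is finite, both presentations are finite and the hypotheses of Proposition~\ref{tietze} are met. The whole point is therefore to verify carefully that the sequence of transformations just described above (adding the relations $e_i=g^{n-i+1}e_1g^{i-1}$, eliminating the generators $e_2,\ldots,e_n$, and simplifying) is legitimate at each stage and that it terminates at exactly $\langle B\mid Q\rangle$.

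First I would justify the additions via (T1): for each $i\in\{2,\ldots,n\}$, the relation $e_i=g^{n-i+1}e_1g^{i-1}$ is a consequence of $R$, since the generating set of $\CI_n$ satisfies it (indeed $e_i=g^{n-i+1}e_1g^{i-1}$ holds in $\CI_n$, as recalled before the theorem) and $\langle A\mid R\rangle$ is a presentation for $\CI_n$; hence by Lemma~\ref{genrel} and the completeness direction of Proposition~\ref{provingpresentation}, it lies in $\rho_R$. After adding these $n-1$ relations, I would apply (T4) successively to eliminate $e_2,\ldots,e_n$, each time deleting the generator $e_i$ together with its defining relation $e_i=g^{n-i+1}e_1g^{i-1}$ and substituting $g^{n-i+1}e_1g^{i-1}$ for every remaining occurrence of $e_i$. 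One must check that the words $g^{n-i+1}e_1g^{i-1}$ lie in $(A\setminus\{e_i\})^*$ so that (T4) is applicable, which is clear. Renaming $e_1$ to $e$ then places everything over the alphabet $B=\{g,e\}$.

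Next I would carry out the simplification of the resulting relations, which is exactly the block computation displayed before the statement of the theorem: $R_1$ is untouched and becomes $Q_1$; each instance of $R_2$ collapses, after cancelling the outer factors using $R_1$, to the single relation $e^2=e$, i.e. $Q_2$; each instance of $R_3$ transforms into $eg^{n-j+i}eg^{n-i+j}=g^{n-j+i}eg^{n-i+j}e$, giving the family $Q_3$; the relations from $R_4$ all reduce to trivialities $e=e$ and may be removed by (T2); and $R_5$ becomes $g(eg^{n-1})^n=(eg^{n-1})^n$, i.e.\ $Q_4$. The deletion of the trivial relations and of any redundant ones is done by (T2), and here one must confirm at each step that the relation being deleted is a consequence of those that remain (for the trivial relations this is immediate). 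Finally a count gives $|Q|=1+1+\binom{n}{2}+1=\frac{1}{2}(n^2-n+6)$, matching the claim.

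The main obstacle is bookkeeping rather than depth: one must make sure that the substitutions performed by (T4) are applied consistently to the relations coming from $R_2$, $R_3$, $R_4$, and $R_5$, that the algebraic simplifications using $g^n=1$ are legitimate consequences (so that (T1)/(T2) may be used to pass to the simplified forms), and that after all eliminations the surviving relations are precisely $Q_1$ through $Q_4$ with no omissions or duplications. In particular the reduction of $R_5$ to $Q_4$ requires recognising that $g^{n-i+1}e_1g^{i-1}=(e_1g^{n-1})\cdots$ telescopes so that the product $e_1e_2\cdots e_n$ becomes $(eg^{n-1})^n$ up to the ambient relation $R_1$; verifying this identity of words modulo $\rho_R$ is the most delicate single computation, though still routine. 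Once these verifications are in place, Proposition~\ref{tietze} yields that $\langle B\mid Q\rangle$ defines $\CI_n$, and the explicit isomorphism is induced by the map $g\mapsto g$, $e\mapsto e_1$.
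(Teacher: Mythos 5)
Your proposal is correct and takes essentially the same route as the paper: the paper likewise obtains $\langle B\mid Q\rangle$ from the presentation $\langle A\mid R\rangle$ of Theorem~\ref{firstpres} by adding the relations $e_i=g^{n-i+1}e_1g^{i-1}$ via (T1), eliminating $e_2,\ldots,e_n$ via (T4), and then simplifying each block $R_1$--$R_5$ exactly as you describe (with $R_4$ collapsing to trivialities, $R_3$ giving $Q_3$, and the telescoping of $e_1e_2\cdots e_n$ into $(e_1g^{n-1})^n$ giving $Q_4$), before invoking Proposition~\ref{tietze}. Your relation count $1+1+\binom{n}{2}+1=\frac{1}{2}(n^2-n+6)$ also agrees with the paper.
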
 

\medskip

Now, we focus our attention on the monoid $\OCI_n$. 

\smallskip 

Consider the alphabet $C=\{x,y,e_1,e_2,\ldots,e_n\}$ and the set $U$ formed by the following monoid relations: 
\begin{description}

\item $(U_1)$ $e_i^2=e_i$, for $1\leqslant i\leqslant n$;

\item $(U_2)$ $xy=e_n$ and $yx=e_1$;

\item $(U_3)$ $xe_1=x$ and $e_1y=y$; 

\item $(U_4)$ $e_ie_j=e_je_i$, for $1\leqslant i<j\leqslant n$; 

\item $(U_5)$ $xe_{i+1}=e_ix$, for $1\leqslant i\leqslant n-1$; 

\item $(U_6)$ $xe_2\cdots e_n=e_1e_2\cdots e_n$.
\end{description}

Observe that $|U|=\frac{1}{2}(n^2+3n+8)$. 

\smallskip 

Below, we show that the monoid $\OCI_n$ is defined by the presentation $\langle C \mid U\rangle$. 

\smallskip 

Let $\theta:C\longrightarrow \OCI_n$ be the mapping defined by
$$
x\theta=x ,\quad y\theta=y, \quad e_i\theta=e_i, \mbox{~for $1\leqslant i\leqslant n$}, 
$$
and let $\vartheta:C^*\longrightarrow \OCI_n$ be the homomorphism of monoids that extends $\theta$ to $C^*$. 

\smallskip 

It is a routine matter to check:  

\begin{lemma}\label{genreloci}
The set of generators $\{x,y,e_1,e_2,\ldots,e_n\}$ of $\OCI_n$ satisfies (via $\vartheta$) all the relations from $U$.
\end{lemma}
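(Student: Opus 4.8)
The plan is to verify, for each family of relations $(U_1)$--$(U_6)$, that applying the homomorphism $\vartheta$ to both sides yields equal elements of $\OCI_n$. Since $\vartheta$ is the monoid homomorphism extending the map $x\mapsto x$, $y\mapsto y$, $e_i\mapsto e_i$, each relation $u=v$ simply requires checking that the composite partial permutation obtained from $u$ equals the one obtained from $v$. I would work directly with the two-line descriptions of the generators already recorded in the excerpt, namely
$$
x=\begin{pmatrix}1&2&\cdots&n-1\\2&3&\cdots&n\end{pmatrix},\qquad
y=\begin{pmatrix}2&3&\cdots&n\\1&2&\cdots&n-1\end{pmatrix},\qquad
e_i=\id_{\Omega_n\setminus\{i\}}.
$$

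First I would dispose of the purely idempotent relations: $(U_1)$ holds because each $e_i$ is a partial identity, and $(U_4)$ holds because products of partial identities are partial identities, with $e_ie_j=\id_{\Omega_n\setminus\{i,j\}}=e_je_i$ as already noted in Section~\ref{basics}. Next I would verify $(U_2)$ by direct composition: computing $xy$ one finds its domain is $\Omega_n\setminus\{n\}$ acting as the identity there, so $xy=e_n$, and symmetrically $yx=e_1$; these are exactly the identities $xy=e_n$, $yx=e_1$ recorded just before Theorem~\ref{rankpres}'s analogue in the size/rank section. For $(U_3)$, since $\dom(x)=\Omega_n\setminus\{n\}$ and postcomposing-on-the-right by $e_1$ restricts the domain to exclude $1$ from the image side; one checks $xe_1$ and $x$ have the same (injective) action, so $xe_1=x$, and dually $e_1y=y$.

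The steps requiring slightly more care are $(U_5)$ and $(U_6)$, since these encode how $x$ interacts with the idempotents by shifting indices. For $(U_5)$ I would compute both $xe_{i+1}$ and $e_ix$ as partial permutations: intuitively $x$ sends $j\mapsto j+1$, so removing $i+1$ from the image of $x$ (the effect of right-multiplying by $e_{i+1}$) corresponds to removing the preimage $i$ from the domain beforehand (the effect of left-multiplying by $e_i$), giving $xe_{i+1}=e_ix$ for $1\leqslant i\leqslant n-1$. For $(U_6)$ I would use the identity $x^k=g^ke_1\cdots e_k$ from \eqref{xkyk} or argue directly that $e_1e_2\cdots e_n=\emptyset$ (the empty transformation), whence both sides of $xe_2\cdots e_n=e_1e_2\cdots e_n$ collapse to the empty map once the domain is exhausted; concretely $e_2\cdots e_n=\id_{\{1\}}$ and $x\,\id_{\{1\}}=\id_{\{1\}}x$ has empty composition with the missing point, matching $e_1e_2\cdots e_n=\emptyset$.

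The main obstacle I anticipate is not conceptual but bookkeeping: the index arithmetic in $(U_5)$ and the domain/image truncations in $(U_3)$ and $(U_6)$ must be tracked carefully under the left-to-right composition convention, since a single off-by-one error in identifying which point is deleted would break the verification. Because every relation reduces to comparing two explicitly computable partial permutations on the finite set $\Omega_n$, no relation presents a genuine difficulty, which is exactly why the lemma is stated as a routine check; the proof is simply the assertion that each of these finitely many comparisons has been carried out and found to hold.
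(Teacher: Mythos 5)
Your proposal is correct and takes the same route as the paper: the paper gives no written proof of Lemma \ref{genreloci} beyond declaring it a routine check, and your case-by-case verification of $(U_1)$--$(U_6)$ as equalities of explicitly computed partial permutations (under the left-to-right composition convention) is precisely that routine check, carried out correctly for $(U_1)$--$(U_5)$. One incidental claim in your treatment of $(U_6)$ is false, though harmlessly so: $x\,\id_{\{1\}}\neq\id_{\{1\}}x$, since with functions acting on the right $\id_{\{1\}}x$ sends $1$ to $2$ and is therefore not empty. What the verification actually needs is only the first of the two compositions: $xe_2\cdots e_n=x\,\id_{\{1\}}=\emptyset$ because $1\notin\im(x)=\{2,\ldots,n\}$, which agrees with $e_1e_2\cdots e_n=\id_\emptyset=\emptyset$; so $(U_6)$ holds and the lemma stands.
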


As a consequence of the previous lemma, if $u,v\in C^*$ are such that the relation $u=v$ is a consequence of $U$, 
then $u\vartheta=v\vartheta$. 

\smallskip 

Next, in order to prove that any relation satisfied by the generating set $\{x,y,e_1,e_2,\ldots,e_n\}$ of $\CI_n$ is a consequence of $U$, 
we first present a series of lemmas. 

\begin{lemma}\label{u3}
The relations $e_nx=x$ and $ye_n=y$ are consequences of $U_2$ and $U_3$.
\end{lemma}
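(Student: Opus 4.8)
The plan is to derive each relation by a short chain of substitutions using only $U_2$ and $U_3$, working in the free monoid $C^*$ and invoking associativity freely. The guiding observation is that $U_2$ lets us trade $e_n$ for the word $xy$; once this is done, a single reassociation exposes the subword $yx$, which $U_2$ rewrites as $e_1$, and then $U_3$ absorbs the resulting $e_1$. In this way both relations reduce to bookkeeping of which half of $U_2$ and $U_3$ is applied at each step.

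For $e_nx=x$, I would begin with $e_nx$ and replace $e_n$ by $xy$ via the first half of $U_2$, obtaining $e_nx=xyx$. Reassociating as $x(yx)$ and applying the second half of $U_2$ ($yx=e_1$) gives $xe_1$, and the first half of $U_3$ ($xe_1=x$) then yields $x$. Chaining these steps, $e_nx=xyx=x(yx)=xe_1=x$, so $e_nx=x$ is a consequence of $U_2$ and $U_3$. For $ye_n=y$ I would proceed symmetrically: $ye_n=y(xy)=(yx)y=e_1y=y$, where the first equality uses $e_n=xy$ from $U_2$, the third uses $yx=e_1$ from $U_2$, and the last uses $e_1y=y$ from $U_3$.

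I expect no genuine obstacle here, since every displayed equality is a single application of one of the given relations (or of associativity in $C^*$), so each is immediately a consequence of $U$, and hence so are $e_nx=x$ and $ye_n=y$. The only point requiring care is the routine matter of tracking which component of $U_2$ and of $U_3$ is invoked at each stage.
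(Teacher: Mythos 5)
Your proposal is correct and follows exactly the same route as the paper's proof: replace $e_n$ by $xy$ via $U_2$, reassociate to expose $yx$, rewrite it as $e_1$ via $U_2$, and absorb $e_1$ using $U_3$, giving $e_nx\approx(xy)x\approx x(yx)\approx xe_1\approx x$ and $ye_n\approx y(xy)\approx(yx)y\approx e_1y\approx y$. No gaps; nothing further is needed.
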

\begin{proof}
Denote the congruence $\rho_{U_2\cup U_3}$ on $C^*$ by $\approx$. 
Then $e_nx\approx (xy)x\approx x(yx)\approx xe_1\approx x$
and, similarly, 
$ye_n\approx y(xy)\approx (yx)y\approx e_1y\approx y$, as required. 
\end{proof} 

\begin{lemma}\label{u5}
The relations $e_{i+1}y=ye_i$, for $1\leqslant i\leqslant n-1$, are consequences of $U_2$ to $U_5$. 
\end{lemma}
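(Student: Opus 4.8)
The plan is to obtain each relation $e_{i+1}y=ye_i$ from its ``$x$-version'' $U_5$ by conjugating with $y$. The guiding idea is that $y$ behaves like a partial inverse of $x$, and $U_2$ encodes precisely this, since it rewrites the adjacent products $xy$ and $yx$ as the idempotents $e_n$ and $e_1$. Concretely, I would start from the relation $xe_{i+1}=e_ix$ of $U_5$ and multiply it on both sides by $y$, obtaining $yxe_{i+1}y=ye_ixy$. Applying $U_2$ to the factor $yx$ on the left and to the factor $xy$ on the right then collapses this to
\[
e_1e_{i+1}y=ye_ie_n .
\]

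It then remains only to absorb the two stray idempotent factors $e_1$ and $e_n$. On the left, since $1\neq i+1$, I would commute $e_1$ past $e_{i+1}$ using $U_4$ and then apply $e_1y=y$ from $U_3$, reducing $e_1e_{i+1}y$ to $e_{i+1}y$. On the right, since $i<n$, I would commute $e_i$ past $e_n$ using $U_4$ and then apply $ye_n=y$; the latter is not one of the defining relations, but is exactly the content of Lemma \ref{u3}, and hence a consequence of $U_2$ and $U_3$. This reduces $ye_ie_n$ to $ye_i$. Chaining these congruences yields $e_{i+1}y=ye_i$, as desired.

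I do not expect a genuine obstacle here. The only points requiring care are the two index side-conditions $1<i+1$ and $i<n$ that license the applications of the commuting relations $U_4$; both hold throughout the range $1\leqslant i\leqslant n-1$. It is also worth observing that the whole argument invokes only $U_2$, $U_3$, $U_4$ and $U_5$ (together with Lemma \ref{u3}, itself deduced from $U_2$ and $U_3$), and never uses the idempotency relations $U_1$, which is consistent with the claim that the relations $e_{i+1}y=ye_i$ are consequences of $U_2$ to $U_5$.
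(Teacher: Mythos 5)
Your proposal is correct and takes essentially the same route as the paper: the paper's proof is exactly the chain $e_{i+1}y \approx e_{i+1}e_1y \approx e_1e_{i+1}y \approx yxe_{i+1}y \approx ye_ixy \approx ye_ie_n \approx ye_ne_i \approx ye_i$, which is your argument read as a single rewriting sequence (using $U_3$ and $U_4$ to introduce and move $e_1$, $U_2$ to trade $e_1$ for $yx$ and $xy$ for $e_n$, $U_5$ in the middle, and $U_4$ with Lemma~\ref{u3} to absorb $e_n$). The only difference is presentational: you frame it as multiplying the relation $xe_{i+1}=e_ix$ by $y$ on both sides and then cleaning up the stray idempotents, while the paper writes the identical steps as one left-to-right chain.
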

\begin{proof}
Let us denote the congruence $\rho_{U_2\cup U_3\cup U_4\cup U_5}$ on $C^*$ by $\approx$. Let $1\leqslant i\leqslant n-1$. 
Then 
$$
e_{i+1}y \approx e_{i+1}e_1y \approx e_1e_{i+1}y \approx yxe_{i+1}y \approx ye_ixy \approx ye_ie_n \approx ye_ne_i\approx ye_i,   
$$
as required. 
\end{proof} 

\begin{lemma}\label{powers}
The relations $x^je_i=x^j=e_{n-i+1}x^j$ and $e_iy^j=y^j=y^je_{n-i+1}$, for $1\leqslant i\leqslant j\leqslant n$, are consequences of $U_2$ to $U_5$. 
\end{lemma}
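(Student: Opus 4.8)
The plan is to prove the four families of relations by induction on $j$, establishing the two relations involving $x^j$ first; the relations involving $y^j$ then follow by an entirely symmetric argument in which $U_3$, Lemma~\ref{u3} and $U_5$ are replaced by their $y$-counterparts (the absorptions $e_1y=y$ from $U_3$ and $ye_n=y$ from Lemma~\ref{u3}, together with the index shift supplied by Lemma~\ref{u5}). Throughout, I work modulo the congruence generated by $U_2$ to $U_5$, and the only ingredients I shall need are the two boundary absorptions $xe_1=x$ (from $U_3$) and $e_nx=x$ (Lemma~\ref{u3}), together with the index-shifting relation $U_5$ used in both directions, namely $xe_{k+1}=e_kx$ and $e_kx=xe_{k+1}$.

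For $x^je_i=x^j$ with $1\leqslant i\leqslant j$, the base case $j=1$ forces $i=1$ and is exactly $xe_1=x$. In the step from $j$ to $j+1$ I distinguish $i\leqslant j$ from $i=j+1$. If $i\leqslant j$, I write $x^{j+1}e_i=x(x^je_i)$ and absorb $e_i$ by the induction hypothesis. The genuinely new case is $i=j+1$: here I factor $x^{j+1}e_{j+1}=x^j(xe_{j+1})$, rewrite $xe_{j+1}=e_jx$ using $U_5$, and finish with $x^je_j=x^j$, which is the $i=j$ instance of the induction hypothesis. The relation $e_{n-i+1}x^j=x^j$ is handled dually: writing $\ell=n-i+1$ it reads $e_\ell x^j=x^j$ for $n-j+1\leqslant\ell\leqslant n$, the base case $j=1$ is $e_nx=x$ from Lemma~\ref{u3}, and in the step the only new case $\ell=n-j$ is resolved by pushing $e_{n-j}$ rightward through a single $x$ via $e_{n-j}x=xe_{n-j+1}$ and applying the induction hypothesis to $e_{n-j+1}x^j$.

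I expect the main obstacle to be nothing more than the index bookkeeping at the two boundaries of each induction, namely the cases $i=j+1$ for $x^je_i=x^j$ and $\ell=n-j$ for $e_{n-i+1}x^j=x^j$ (and their $y$-analogues). In each such case one must check that the single use of $U_5$ (or Lemma~\ref{u5}) is admissible, i.e. that the index involved lies in the permitted range $1\leqslant\cdot\leqslant n-1$ required by $U_5$, and that the shifted idempotent index then falls within the span $[1,j]$ (respectively $[n-j+1,n]$) covered by the induction hypothesis. The hypothesis $i\leqslant j$ is precisely what guarantees these range conditions; once they are verified, every rewriting is a single application of a relation and no further difficulty arises.
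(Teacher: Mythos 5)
Your proof is correct and follows essentially the same route as the paper's: the same four ingredients ($xe_1=x$ and $e_1y=y$ from $U_3$, $e_nx=x$ and $ye_n=y$ from Lemma~\ref{u3}, and the index shifts of $U_5$ and Lemma~\ref{u5}) are applied at exactly the same boundary points, and all range conditions you check do hold. The only difference is organizational: the paper first proves the diagonal case $x^ie_i\approx x^i\approx e_{n-i+1}x^i$ (and its $y$-analogue) by induction on $i$ and then gets general $i\leqslant j$ via the factorization $x^je_i\approx x^{j-i}(x^ie_i)$, whereas you fold both steps into a single induction on $j$ — your boundary case $i=j+1$ is the paper's inductive step verbatim, and your case $i\leqslant j$ is that factorization applied one letter at a time.
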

\begin{proof}
Denote the congruence $\rho_{U_2\cup U_3\cup U_4\cup U_5}$ on $C^*$ by $\approx$. 

First, we show by induction on $i$ that  
\begin{equation}\label{powersfix}
\mbox{$x^ie_i\approx x^i\approx e_{n-i+1}x^i$ and $e_iy^i\approx y^i\approx y^ie_{n-i+1}$, for $1\leqslant i\leqslant n$.}
\end{equation}
If $i=1$ then $xe_1\approx x\approx e_1x$ and $e_1y\approx y\approx ye_n$, by $U_3$ and Lemma \ref{u3}. 

Now, suppose that $x^ie_i\approx x^i\approx e_{n-i+1}x^i$ and $e_iy^i\approx y^i\approx y^ie_{n-i+1}$, for some $1\leqslant i\leqslant n-1$. 
Hence 
$$
x^{i+1}e_{i+1} \approx x^ixe_{i+1} \approx x^ie_ix \approx x^ix\approx x^{i+1} 
\approx xx^i \approx xe_{n-i+1}x^i \approx e_{n-i}xx^i \approx e_{n-i}x^{i+1} 
$$
and 
$$
e_{i+1}y^{i+1} \approx e_{i+1}yy^i  \approx ye_iy^i \approx yy^i \approx y^{i+1}
\approx y^i y\approx y^ie_{n-i+1}y \approx y^iye_{n-i} \approx y^{i+1}e_{n-i}. 
$$ 
Thus, we have proved (\ref{powersfix}).  

Next, let $1\leqslant i\leqslant j\leqslant n$. Then 
$$
x^je_i\approx x^{j-i}x^ie_i \approx  x^{j-i}x^i \approx x^j \approx x^i x^{j-i} \approx e_{n-i+1}x^ix^{j-i} \approx e_{n-i+1}x^j
$$ 
and 
$$
e_iy^j \approx e_iy^iy^{j-i} \approx y^iy^{j-i} \approx y^j \approx y^{j-i}y^i \approx y^{j-i}y^ie_{n-i+1} \approx y^je_{n-i+1}, 
$$
as required. 
\end{proof} 

From now on, denote the congruence $\rho_U$ on $C^*$ by $\approx$. 

\begin{lemma}\label{zero}
The relations 
$e_1\cdots e_n x =  x e_1\cdots e_n  = e_1\cdots e_n = e_1\cdots e_n y = y e_1\cdots e_n$ are consequences of $U$. 
\end{lemma}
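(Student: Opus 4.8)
The plan is to show each of the five expressions in the chain is $\approx$-equivalent to $e_1\cdots e_n$ by establishing the four separate equalities $e_1\cdots e_n x\approx e_1\cdots e_n$, $xe_1\cdots e_n\approx e_1\cdots e_n$, $e_1\cdots e_n y\approx e_1\cdots e_n$, and $ye_1\cdots e_n\approx e_1\cdots e_n$. Relation $U_6$ already gives me $xe_2\cdots e_n\approx e_1\cdots e_n$, so $xe_1\cdots e_n\approx e_1\cdots e_n$ follows immediately: using $U_1$ to idempotently absorb, I can write $xe_1e_2\cdots e_n\approx xe_1(e_2\cdots e_n)$ and, since $xe_1\approx x$ by $U_3$, this is $x e_2\cdots e_n\approx e_1\cdots e_n$. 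That disposes of the second equality cheaply.

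Next I would handle $e_1\cdots e_n x$. The idea is to migrate $x$ leftward through the product $e_1\cdots e_n$ using the commutation relations $U_5$ ($xe_{i+1}\approx e_ix$) together with the fixed-point relations from Lemma \ref{powers} at the extremes (the $j=1$ case of Lemma \ref{powers} gives $xe_i\approx x$ and $e_{n-i+1}x\approx x$). Concretely, I expect to push $x$ past $e_n,e_{n-1},\ldots$ one factor at a time, each step converting $e_{k}x$ back into $xe_{k-1}$ via $U_5$, until $x$ reaches the front, where the relation $e_nx\approx x$ from Lemma \ref{u3} lets me absorb it; combining this with $U_6$ (rewritten using $U_3$ and $U_1$ as above) yields $e_1\cdots e_n x\approx e_1\cdots e_n$. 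The symmetric statements for $y$, namely $e_1\cdots e_n y\approx e_1\cdots e_n$ and $ye_1\cdots e_n\approx e_1\cdots e_n$, follow by the dual computation, using $e_{i+1}y\approx ye_i$ from Lemma \ref{u5}, the fixed-point relations $e_iy^j\approx y^j$ and $y^je_{n-i+1}\approx y^j$ from Lemma \ref{powers}, and $ye_n\approx y$ from Lemma \ref{u3}, in place of their $x$-counterparts.

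A cleaner route for the $y$-side, which I would probably adopt to avoid redoing the migration argument, is to derive a $y$-analogue of $U_6$ directly. Since $U_6$ says $xe_2\cdots e_n\approx e_1\cdots e_n$, multiplying on the left by $y$ and using $yx\approx e_1$ (from $U_2$) gives $e_1e_2\cdots e_n\approx ye_1\cdots e_n$ after an application of $U_1$ to merge the repeated $e_1$; this settles $ye_1\cdots e_n\approx e_1\cdots e_n$ without any inductive commutation. The remaining two equalities, $e_1\cdots e_n x\approx e_1\cdots e_n$ and $e_1\cdots e_n y\approx e_1\cdots e_n$, can then be obtained from the two already-proved equalities by the involution-like symmetry between $x$ and $y$, or simply by the migration argument sketched above.

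The main obstacle I anticipate is bookkeeping in the migration step: I must be careful that the commutation relation $U_5$ only moves $x$ past $e_{i+1}$ (producing $e_i$), so the indices shift by one at each hop, and I need the boundary relation at $e_n$ (Lemma \ref{u3}) and the absorption relations (Lemma \ref{powers}) to close the computation correctly at the ends. Getting the index ranges right so that every factor $e_1,\ldots,e_n$ is accounted for, and verifying that no factor is dropped or duplicated when the idempotent relations $U_1$ and $U_4$ are invoked to reorder or merge, is the delicate part; the underlying semigroup identities are all immediate, so once the order of rewriting is pinned down the argument is routine.
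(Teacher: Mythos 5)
Your proposal is, in substance, the paper's own proof: the same decomposition into four equalities, with $xe_1\cdots e_n\approx e_1\cdots e_n$ obtained from $U_3$ and $U_6$, with $e_1\cdots e_nx\approx e_1\cdots e_n$ obtained by commuting $x$ leftward through the idempotents via $U_5$ and the boundary relation of Lemma \ref{u3}, with $ye_1\cdots e_n\approx e_1\cdots e_n$ obtained by inserting $yx\approx e_1$ (from $U_2$) in front of $U_6$, and with $e_1\cdots e_ny\approx e_1\cdots e_n$ obtained by the dual migration (Lemma \ref{u5}, $U_3$, Lemma \ref{u3}, $U_4$) terminating in the already-proved third equality.

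Two caveats. First, bookkeeping slips in the migration step: $U_5$ reads $e_ix\approx xe_{i+1}$, so moving $x$ to the \emph{left} raises the index by one (you wrote $e_kx\approx xe_{k-1}$), and the absorption $e_nx\approx x$ of Lemma \ref{u3} is the \emph{first} step, applied at the right-hand end of $e_1\cdots e_nx$, not something that happens when $x$ reaches the front; the correct chain is $e_1\cdots e_nx\approx e_1\cdots e_{n-1}x\approx xe_2\cdots e_n\approx e_1\cdots e_n$. Second, and more importantly, the ``involution-like symmetry between $x$ and $y$'' cannot be invoked on its own: $U$ is \emph{not} invariant under the swap $x\leftrightarrow y$, $e_i\leftrightarrow e_{n-i+1}$, precisely because $U_6$ is one-sided, and the image of $U_6$ under that swap (namely $ye_1\cdots e_{n-1}=e_1\cdots e_n$, after reordering by $U_4$) is essentially the statement you are trying to prove --- so that route is circular unless you first establish a $y$-analogue of $U_6$. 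For the same reason, the $y$-side is not a purely ``dual computation'': both the paper and your own cleaner route must first derive $ye_1\cdots e_n\approx e_1\cdots e_n$ from $U_6$ via $U_2$, and only then can the migration for $e_1\cdots e_ny$ be closed off. Since your proposal explicitly includes that derivation and offers the migration argument as the fallback for the fourth equality, the proof goes through once those routes (rather than the symmetry appeal) are the ones taken.
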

\begin{proof}
First, we have $x e_1\cdots e_n  \approx x e_2\cdots e_n  \approx e_1\cdots e_n$, by $U_3$ and $U_6$. 
Secondly, 
$$
e_1\cdots e_n  x\approx e_1\cdots e_{n-1}  x\approx x e_2\cdots e_n \approx x e_1\cdots e_n \approx e_1\cdots e_n, 
$$
by Lemma \ref{u3}, $U_5$, $U_3$ and the first relation we proved.  
On the other hand, 
$$
ye_1\cdots e_n \approx yxe_1\cdots e_n \approx e_1e_1\cdots e_n \approx e_1\cdots e_n 
$$
by the first relation we proved, $U_2$ and $U_1$. Finally, 
$$
e_1\cdots e_n y \approx e_1ye_1\cdots e_{n-1}  \approx y e_1\cdots e_{n-1} \approx y e_n e_1\cdots e_{n-1} \approx y e_1\cdots e_n \approx e_1\cdots e_n,  
$$ 
by Lemma \ref{u5}, $U_3$, Lemma \ref{u3}, $U_4$ and the third relation we proved, 
as required. 
\end{proof}

\begin{lemma}\label{nzero}
The relations $x^n=e_1\cdots e_n =y^n$ are consequences of $U$. 
\end{lemma}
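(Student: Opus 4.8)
The plan is to prove both equalities by evaluating a single well-chosen word in two different ways, exploiting the fact that, modulo $U$, both $x^n$ (respectively $y^n$) and the long idempotent $e_1\cdots e_n$ act as absorbing elements, but for complementary reasons: the power absorbs idempotents by Lemma \ref{powers}, while the long idempotent absorbs $x$ and $y$ by Lemma \ref{zero}. The trick is to bring the two together in one product.

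First I would establish $x^n \approx e_1\cdots e_n$ by computing the word $x^n e_1 e_2 \cdots e_n$ in two ways. Taking $j=n$ in Lemma \ref{powers} gives $x^n e_i \approx x^n$ for every $i\in\{1,\ldots,n\}$, so absorbing the idempotents one at a time (leftmost first) yields $x^n e_1\cdots e_n \approx x^n$. On the other hand, Lemma \ref{zero} provides $x e_1\cdots e_n \approx e_1\cdots e_n$; feeding one factor of $x$ at a time into the long idempotent gives $x^n e_1\cdots e_n = x^{n-1}(x e_1\cdots e_n) \approx x^{n-1} e_1\cdots e_n \approx \cdots \approx e_1\cdots e_n$. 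Comparing the two evaluations forces $x^n \approx e_1\cdots e_n$.

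The relation $y^n \approx e_1\cdots e_n$ is obtained by the mirror-image argument applied to $e_1\cdots e_n\, y^n$. The $j=n$ case of Lemma \ref{powers} yields $e_i y^n \approx y^n$, so absorbing each idempotent from the right end produces $e_1\cdots e_n\, y^n \approx y^n$, whereas Lemma \ref{zero} gives $e_1\cdots e_n\, y \approx e_1\cdots e_n$, whence $e_1\cdots e_n\, y^n \approx e_1\cdots e_n$ after consuming the factors of $y$ one at a time. Chaining these two chains of congruences gives $y^n \approx e_1\cdots e_n$, and together with the previous paragraph this establishes $x^n \approx e_1\cdots e_n \approx y^n$, as required.

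Since all the real work has already been packaged into Lemmas \ref{powers} and \ref{zero}, I do not expect a genuine obstacle here; the only point that demands a little care is verifying that the index hypothesis $1\leqslant i\leqslant j\leqslant n$ of Lemma \ref{powers} is satisfied throughout, which it is, because the lemma is invoked exclusively with $j=n$ so that every index $i\in\{1,\ldots,n\}$ is admissible.
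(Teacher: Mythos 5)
Your proposal is correct and is essentially the paper's own proof: the paper also establishes the chain $z^n\approx z^ne_1\cdots e_n\approx z^{n-1}e_1\cdots e_n\approx\cdots\approx e_1\cdots e_n$ for $z\in\{x,y\}$, i.e.\ it evaluates $z^ne_1\cdots e_n$ in two ways using Lemma \ref{powers} (to absorb the idempotents) and Lemma \ref{zero} (to consume the factors of $z$). The only cosmetic difference is that for $y$ you work with the mirror word $e_1\cdots e_n\,y^n$ instead of $y^ne_1\cdots e_n$, which changes nothing of substance.
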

\begin{proof}
By Lemmas \ref{powers} and \ref{zero}, for $z\in\{x,y\}$, we have 
$$
z^n\approx z^ne_1\cdots e_n \approx  z^{n-1}e_1\cdots e_n \approx \cdots \approx z^2e_1\cdots e_n \approx ze_1\cdots e_n \approx e_1\cdots e_n, 
$$
as required. 
\end{proof} 

\begin{lemma}\label{commuting}
Let $z\in\{x,y\}$ and $u\in\{e_1,\ldots, e_n\}^*$. 
Then, there exists $v\in\{e_1,\ldots, e_n\}^*$ such that $uz=zv$ is a consequence of $U$. 
\end{lemma}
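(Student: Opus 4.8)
The plan is to prove this by induction on the length $|u|$ of the word $u\in\{e_1,\ldots,e_n\}^*$, reducing the whole statement to a single-letter commutation rule; throughout, $\approx$ denotes the congruence $\rho_U$ as fixed above. The base case $|u|=0$ is immediate: if $u=1$ then $uz=z=z\cdot 1$, so one may take $v=1$.

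For the inductive step I would write $u=e_i u'$ with $u'\in\{e_1,\ldots,e_n\}^*$ of strictly smaller length, and first establish the key single-letter fact: a single generator can always be pushed past $z$ to the right, at the cost of shifting its index by one (or of disappearing at a boundary). Concretely, for $z=x$ I would invoke relation $U_5$, which gives $e_ix\approx xe_{i+1}$ for $1\leqslant i\leqslant n-1$, together with Lemma \ref{u3}, which gives $e_nx\approx x$; dually, for $z=y$ I would invoke Lemma \ref{u5}, which gives $e_iy\approx ye_{i-1}$ for $2\leqslant i\leqslant n$, together with relation $U_3$, which gives $e_1y\approx y$. In each of these four cases one obtains $e_iz\approx zw_i$ for a suitable $w_i\in\{e_1,\ldots,e_n\}^*$ that is either a single generator or the empty word.

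With this rule in hand the induction closes at once: applying the induction hypothesis $u'z\approx zv'$ and then the single-letter rule to the leading generator yields $uz=e_iu'z\approx e_izv'=(e_iz)v'\approx zw_iv'$. Hence $uz\approx zv$ with $v=w_iv'\in\{e_1,\ldots,e_n\}^*$, which is exactly what the lemma asserts.

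I do not expect a genuine obstacle here; essentially the entire content is bookkeeping of indices, and all the needed moves are already available (directly from $U_3$ and $U_5$, or from the previously proved Lemmas \ref{u3} and \ref{u5}). The only point deserving care is the treatment of the two boundary generators -- $e_n$ for $z=x$ and $e_1$ for $z=y$ -- which are \emph{absorbed} by the relations $e_nx\approx x$ and $e_1y\approx y$ rather than merely shifted, so that the corresponding $w_i$ is empty; away from these boundaries the index maps $i\mapsto i+1$ (for $x$) and $i\mapsto i-1$ (for $y$) keep every letter inside the admissible range $\{1,\ldots,n\}$, ensuring the rewriting never leaves the submonoid generated by $e_1,\ldots,e_n$.
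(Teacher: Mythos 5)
Your proof is correct, and its engine is identical to the paper's: the four single-letter moves you isolate ($e_ix\approx xe_{i+1}$ for $1\leqslant i\leqslant n-1$ from $U_5$, $e_nx\approx x$ from Lemma \ref{u3}, $e_iy\approx ye_{i-1}$ for $2\leqslant i\leqslant n$ from Lemma \ref{u5}, and $e_1y\approx y$ from $U_3$) are exactly the relations the paper invokes, with the correct index ranges and the correct treatment of the two absorbing boundary cases. Where you differ is in the bookkeeping. The paper first normalizes $u$ into a product $e_{i_1}\cdots e_{i_k}$ with strictly increasing indices, using $U_1$ (idempotency) and $U_4$ (commutativity), and then sweeps this whole block past $z$ in a single pass, so that the boundary absorption can occur at most once and only at a known end of the word ($i_k=n$ when $z=x$, $i_1=1$ when $z=y$). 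You instead induct on $|u|$ and push one letter at a time, which dispenses with the normalization step entirely: your argument never uses $U_1$ or $U_4$, and it handles repeated or unordered occurrences of the boundary generators without preprocessing. Your route is therefore slightly more economical in the relations it consumes, while the paper's route yields as a by-product a near-canonical form for the word $v$ (an increasing product of distinct generators); nothing later in the paper exploits that extra precision, since in the proof of Theorem \ref{firstpresoci} the words produced are renormalized with $U_1$ and $U_4$ anyway, so both proofs serve equally well.
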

\begin{proof}
By applying $U_4$ and $U_1$, we obtain $u\approx e_{i_1}\cdots e_{i_k}$, for some $1\leqslant i_1<\cdots<i_k\leqslant n$ and $0\leqslant k\leqslant n$. 

Suppose that $z=x$. If $i_k<n$ then $ux\approx e_{i_1}\cdots e_{i_k}x \approx x e_{i_1+1}\cdots e_{i_k+1}$, by $U_5$. 
On the other hand, if $i_k=n$ then $ux\approx e_{i_1}\cdots e_{i_{k-1}}e_nx \approx e_{i_1}\cdots e_{i_{k-1}}x \approx x e_{i_1+1}\cdots e_{i_{k-1}+1}$, by Lemma \ref{u3} and $U_5$. 

Suppose that $z=y$. If $i_1>1$ then $uy\approx e_{i_1}\cdots e_{i_k}y\approx y e_{i_1-1}\cdots e_{i_k-1}$, by Lemma \ref{u5}. 
On the other hand, if $i_1=1$ then $uy\approx  e_{i_1}\cdots e_{i_k}y\approx e_1y e_{i_2-1}\cdots e_{i_k-1}\approx  y e_{i_2-1}\cdots e_{i_k-1}$, 
by Lemma \ref{u5} and $U_3$. 
\end{proof} 

\begin{lemma}\label{leftxy}
Let $w\in C^*$. 
Then, there exist $z\in\{x,y\}$, $u\in\{e_1,\ldots, e_n\}^*$ and $0\leqslant r\leqslant n-1$ such that $w=z^ru$ is a consequence of $U$. 
\end{lemma}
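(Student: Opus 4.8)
The plan is to rewrite an arbitrary word $w\in C^*$ in three stages: first push all the idempotent letters $e_1,\ldots,e_n$ to the right, then collapse the remaining prefix over $\{x,y\}$ into a power of a single letter, and finally reduce the exponent modulo $n$. All three stages use only the relations $U$ together with the auxiliary lemmas already established, so throughout I write $\approx$ for $\rho_U$.

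First I would show that $w\approx st$ for some $s\in\{x,y\}^*$ and $t\in\{e_1,\ldots,e_n\}^*$. The basic move is Lemma~\ref{commuting}: whenever a letter $z\in\{x,y\}$ is immediately preceded by an $e$-letter, I apply Lemma~\ref{commuting} to the maximal $e$-block $u$ ending at that position, replacing $uz$ by $zv$. To guarantee termination I attach to each word the monovariant $N(w)=\sum(\text{number of }e\text{-letters lying to the left of a given }x\text{- or }y\text{-letter})$, the sum running over all occurrences of $x$ and $y$. Since Lemma~\ref{commuting} produces a block $v$ no longer than $u$ and drags one $\{x,y\}$-letter strictly to the left of the nonempty block $u$, each application strictly decreases $N(w)$ (the moved letter loses the $|u|\geqslant1$ preceding $e$-letters, while every other $\{x,y\}$-letter loses $|u|-|v|\geqslant0$ of them). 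When $N(w)=0$ no $\{x,y\}$-letter is preceded by any $e$-letter, which is exactly the asserted form $st$.

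Next I would reduce the prefix $s$. Here the relations $xy=e_n$ and $yx=e_1$ of $U_2$ serve as cancellation: as long as $s$ contains both an $x$ and a $y$ it must contain an adjacent factor $xy$ or $yx$, which $U_2$ rewrites as a single $e$-letter, and Lemma~\ref{commuting} again slides that $e$-letter to the far right, decreasing the number of $\{x,y\}$-letters by two. Iterating, $s$ collapses to $z^r$ for a single $z\in\{x,y\}$ and some $r\geqslant0$ (taking $z=x$, $r=0$ when $s$ disappears entirely), so that $w\approx z^ru$ with $u$ an $e$-word. Finally, using $z^n=e_1\cdots e_n$ from Lemma~\ref{nzero}, whenever $r\geqslant n$ I rewrite $z^ru\approx z^{r-n}(e_1\cdots e_n)u$, absorbing the freshly created block $e_1\cdots e_n$ into the $e$-word on the right and lowering $r$ by $n$; repeating brings $r$ into the range $0\leqslant r\leqslant n-1$, as required.

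I expect the only delicate point to be the bookkeeping of the termination argument in the first stage, namely verifying that sliding an $e$-block past a single $x$ or $y$ genuinely decreases the chosen monovariant, given that Lemma~\ref{commuting} may shorten (and never lengthen) the $e$-block. The second and third stages are then routine: the collapse of $s$ is the familiar fact that a word in two mutually cancelling letters reduces to a power of one of them, and the exponent reduction is a direct application of Lemma~\ref{nzero}.
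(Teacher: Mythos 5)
Your proof is correct in substance, but it is organized quite differently from the paper's. The paper proves Lemma~\ref{leftxy} by a single induction on the length of $w$: it appends one letter $a$ to a word already in the normal form $z^ru_1$ with $0\leqslant r\leqslant n-1$, absorbs $a$ into the idempotent block if $a\in\{e_1,\ldots,e_n\}$, and otherwise commutes $u_1a\approx av_1$ by Lemma~\ref{commuting} and updates the prefix: if $r=0$ the prefix restarts at $a$; if $a=z$ the exponent increments, with the wrap-around case $r=n-1$ handled by Lemma~\ref{nzero}; if $a\neq z$ the exponent decrements via $U_2$, since $z^ra\approx z^{r-1}e_i$. Because the inductive hypothesis already carries the bound $0\leqslant r\leqslant n-1$ together with the full normal form, no separate termination argument is ever needed. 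Your three-stage global rewriting (push the $e$-letters right, cancel adjacent $xy$/$yx$ in the $\{x,y\}$-prefix, reduce the exponent modulo $n$) uses exactly the same ingredients --- Lemma~\ref{commuting}, $U_2$, Lemma~\ref{nzero} --- but pays for the global viewpoint with an explicit termination argument, your monovariant $N(w)$.

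That monovariant is the one place where your argument relies on more than what you may cite. Lemma~\ref{commuting} as stated only asserts the existence of \emph{some} $v$ with $uz\approx zv$; it says nothing about $|v|\leqslant|u|$, and your claim that $N$ strictly decreases needs that bound (if $v$ could be longer than $u$, every $\{x,y\}$-letter to the right of $z$ would gain $e$-letters on its left and $N$ could increase). The bound is in fact true of the $v$ constructed in the proof of Lemma~\ref{commuting}: there $u$ is first collapsed to $e_{i_1}\cdots e_{i_k}$ with distinct indices, so $k\leqslant|u|$, and the resulting $v$ has length $k$ or $k-1$. So your argument is completable by citing the construction rather than the statement. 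Alternatively, you can avoid lengths altogether and make stage 1 a clean induction on the number $p$ of $\{x,y\}$-letters in $w$: write $w=w'at$ with $a$ the last such letter and $t$ an $e$-word, apply the inductive hypothesis to $w'$ to get $w\approx s't'at$ with $s'\in\{x,y\}^*$, then apply Lemma~\ref{commuting} once, to the whole block $t'$, to get $w\approx s'avt$ with $s'a\in\{x,y\}^*$ and $vt$ an $e$-word. After that patch, your stages 2 and 3 go through verbatim.
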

\begin{proof} 
We proceed by induction on $|w|$. 

If $|w|\leqslant1$ then there is nothing to prove. 

So, let us admit that the lemma is valid for any word $w\in C^*$ such that $|w|=m\geqslant1$. 

Take $w\in C^*$ such that $|w|=m+1$ and let $w_1\in C^*$ and $a\in C$ be such that $w=w_1a$. 
By the induction hypothesis there exist $z\in\{x,y\}$, $u_1\in\{e_1,\ldots, e_n\}^*$ and $0\leqslant r\leqslant n-1$ such that $w_1\approx z^ru_1$. 

If $a\in\{e_1,\ldots,e_n\}$ then $w\approx z^ru_1a$ and $u_1a\in \{e_1,\ldots, e_n\}^*$ and so, in this case, the lemma is proved. 

On the other hand, suppose that $a\in\{x,y\}$. Then, by Lemma \ref{commuting}, $u_1a\approx av_1$, for some $v_1\in\{e_1,\ldots, e_n\}^*$. 

If $r=0$ then $w\approx u_1a\approx av_1$ and so, also in this case, the lemma is proved. 

Therefore, suppose that $r\geqslant1$. 

If $a=z$ then $w\approx z^ru_1z\approx z^{r+1}v_1$. In this case, if $r\leqslant n-2$ then the lemma is proved. 
On the other hand, if $r=n-1$ then $w\approx z^nv_1\approx e_1\cdots e_nv_1(\approx e_1\cdots e_n)$, by Lemma \ref{nzero}, which proves the lemma also in this case.   

Finally, suppose that $a\neq z$. Then, we have $w\approx z^ru_1a\approx z^rav_1\approx z^{r-1}e_iv_1$, with $i=1$ if $z=y$ and $i=n$ if $z=x$, by $U_2$. 
So, in this case too, the lemma is proved. 
\end{proof} 

We are now in a position to prove that: 

\begin{theorem}\label{firstpresoci} 
The monoid $\OCI_n$ is defined by the presentation $\langle C \mid U\rangle$ on $n+2$ generators and $\frac{1}{2}(n^2+3n+8)$ relations. 
\end{theorem}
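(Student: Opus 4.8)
The plan is to invoke Proposition~\ref{provingpresentation}. By Lemma~\ref{genreloci} the generating set satisfies every relation of $U$, so $\rho_U\subseteq\ker\vartheta$, where $\ker\vartheta=\{(u,v)\in C^*\times C^*\mid u\vartheta=v\vartheta\}$ is the kernel congruence; it therefore remains to prove the reverse inclusion, i.e.\ that $u\vartheta=v\vartheta$ implies $u\approx v$. For $\CI_n$ (Theorem~\ref{firstpres}) this was done by matching two normal forms directly, but here that route is delicate because a given order-preserving partial permutation could \emph{a priori} be realised both as a restriction of a power of $x$ and as a restriction of a power of $y$. Instead I would finish by a counting argument: I will produce a set $S$ of words, with $|S|=3\cdot2^n-2n-2=|\OCI_n|$, such that every word of $C^*$ is $\approx$-congruent to a member of $S$. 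Since $\vartheta$ factors through a surjection $C^*/\rho_U\longrightarrow\OCI_n$, this yields $|\OCI_n|\leqslant|C^*/\rho_U|\leqslant|S|=|\OCI_n|$, forcing $\rho_U=\ker\vartheta$, as required (the generator and relation counts being already recorded in the text).

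First I would bring an arbitrary word to canonical shape. By Lemma~\ref{leftxy}, any $w\in C^*$ satisfies $w\approx z^r u$ with $z\in\{x,y\}$, $0\leqslant r\leqslant n-1$ and $u\in\{e_1,\ldots,e_n\}^*$, and by $U_4$ and $U_1$ I may take $u=e_{i_1}\cdots e_{i_k}$ with $1\leqslant i_1<\cdots<i_k\leqslant n$. I then delete the idempotents lying outside the image of $z^r$. If $z=x$ (the case $r=0$ being absorbed here, as $x^0$ is the empty word), Lemma~\ref{powers} gives $x^re_i\approx x^r$ for $i\leqslant r$; moving each such $e_i$ next to $x^r$ by $U_4$ and cancelling it leaves a word $x^re_{i_1}\cdots e_{i_k}$ with $r<i_1<\cdots<i_k\leqslant n$. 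Symmetrically, for $z=y$ with $1\leqslant r\leqslant n-1$, Lemma~\ref{powers} gives $y^re_\ell\approx y^r$ for $\ell\geqslant n-r+1$, and cancelling these leaves $y^re_{i_1}\cdots e_{i_k}$ with $1\leqslant i_1<\cdots<i_k\leqslant n-r$. These are the \emph{canonical words}.

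The only remaining slack is that the canonical words emptying the whole image all represent the zero of $\OCI_n$ and must be merged. These are $x^re_{r+1}\cdots e_n$ for $0\leqslant r\leqslant n-1$ and $y^re_1\cdots e_{n-r}$ for $1\leqslant r\leqslant n-1$. I would show each is $\approx e_1\cdots e_n$: using $x^re_i\approx x^r$ for $i\leqslant r$ (Lemma~\ref{powers}) together with $U_4$ gives $x^re_{r+1}\cdots e_n\approx x^re_1\cdots e_n$, and since $xe_1\cdots e_n\approx e_1\cdots e_n$ (Lemma~\ref{zero}) a short induction on $r$ gives $x^re_1\cdots e_n\approx e_1\cdots e_n$; the computation for $y^re_1\cdots e_{n-r}$ is parallel, using $y^re_\ell\approx y^r$ for $\ell\geqslant n-r+1$ and $ye_1\cdots e_n\approx e_1\cdots e_n$. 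Let $S$ be the set of canonical words with all of these zero-words identified to the single word $e_1\cdots e_n$.

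It remains to count $S$. The $x$-canonical words are indexed by $0\leqslant r\leqslant n-1$ and a subset of $\{r+1,\ldots,n\}$, contributing $\sum_{r=0}^{n-1}2^{\,n-r}=2^{n+1}-2$; the $y$-canonical words by $1\leqslant r\leqslant n-1$ and a subset of $\{1,\ldots,n-r\}$, contributing $\sum_{r=1}^{n-1}2^{\,n-r}=2^{n}-2$. As the string $z^re_{i_1}\cdots e_{i_k}$ determines $z$, $r$ and the index set, these are pairwise distinct words, giving $3\cdot2^n-4$ canonical words in all; exactly $n+(n-1)=2n-1$ of them are zero-words, so after the identification $|S|=(3\cdot2^n-4)-(2n-1)+1=3\cdot2^n-2n-2=|\OCI_n|$. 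Every word of $C^*$ is $\approx$-congruent to a member of $S$ by the reduction above, so the counting inequality closes and $\rho_U=\ker\vartheta$. I expect the main obstacle to be precisely this bookkeeping---checking that stripping the redundant idempotents via Lemma~\ref{powers} and collapsing the $2n-1$ zero-words leaves exactly $|\OCI_n|$ words---while the underlying rewrites are routine consequences of Lemmas~\ref{leftxy}, \ref{powers} and~\ref{zero}.
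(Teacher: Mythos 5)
Your proposal is correct, and it closes the argument by a genuinely different mechanism from the paper's. Both proofs share the same normal-form machinery---Lemma~\ref{leftxy}, then $U_1$, $U_4$ and Lemma~\ref{powers} to strip or pad idempotents, and Lemma~\ref{zero} for the empty transformation---but the paper finishes \emph{semantically}: given $w_1\vartheta=w_2\vartheta$, it runs a three-way case analysis on $(z_1,z_2)\in\{x,y\}^2$, computes the images of both normal forms via~(\ref{xkyk}), and invokes Lemma~\ref{unique} to force the exponents to agree, the mixed case $z_1=x$, $z_2=y$ being shown to occur only for the empty map (exactly the delicacy you flagged, which the paper resolves rather than avoids). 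You finish \emph{by counting}: every word reduces to one of $3\cdot2^n-4$ canonical words, the $2n-1$ zero-words collapse to $e_1\cdots e_n$, and $|S|=3\cdot2^n-2n-2=|\OCI_n|$ closes the pigeonhole $|\OCI_n|\leqslant|C^*/\rho_U|\leqslant|S|$, so the induced surjection $C^*/\rho_U\longrightarrow\OCI_n$ is a bijection between finite sets, which is precisely the missing inclusion $\ker\vartheta\subseteq\rho_U$. Your arithmetic checks out ($\sum_{r=0}^{n-1}2^{n-r}+\sum_{r=1}^{n-1}2^{n-r}=3\cdot2^n-4$, minus $2n-1$ zero-words plus their single representative), and the reductions you cite are all legitimate consequences of the stated lemmas. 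What your route buys: no case analysis, no appeal to Lemma~\ref{unique}, and no need to decide when two canonical words represent the same element---the cardinality comparison does that for free. What it costs: the proof now depends on the exact size formula $|\OCI_n|=3\cdot2^n-2n-2$ established in Section~\ref{basics}, and it is only as robust as the bookkeeping, since any overcount of $S$ (failure of distinctness of the canonical words, or a missed zero-word) would void the conclusion; the paper's comparison argument needs neither the cardinality of $\OCI_n$ nor any counting.
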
 
\begin{proof}
In view of Proposition \ref{provingpresentation} and Lemma \ref{genreloci}, 
it remains to prove that any relation satisfied by the generating set $\{x,y,e_1,e_2,\ldots,e_n\}$ of $\OCI_n$ is a consequence of $U$. 

Let $w_1,w_2\in C^*$ be such that $w_1\vartheta=w_2\vartheta$. We aim to show that $w_1\approx w_2$. 

By Lemma \ref{leftxy} there exist $z_1,z_2\in\{x,y\}$, $u_1,u_2\in\{e_1,\ldots, e_n\}^*$ and $0\leqslant r_1,r_2\leqslant n-1$ 
such that $w_1\approx z_1^{r_1}u_1$ and $w_2\approx z_2^{r_2}u_2$. 
By applying $U_4$ and $U_1$ to $u_1$ and $u_2$, we may find $1\leqslant i_1<\cdots<i_{k_1}\leqslant n$ and $1\leqslant j_1<\cdots<j_{k_2}\leqslant n$, 
with $0\leqslant k_1,k_2\leqslant n$, such that 
$w_1\approx z_1^{r_1}e_{i_1}\cdots e_{i_{k_1}}$ and $w_2\approx z_2^{r_2}e_{j_1}\cdots e_{j_{k_2}}$. 

\smallskip 

First, let us suppose that $z_1=z_2=x$. 

Let $0\leqslant t_1\leqslant k_1$ and  $0\leqslant t_2\leqslant k_2$ be such that 
$i_{t_1}\leqslant r_1 <i_{t_1+1}$ and $j_{t_2}\leqslant r_2 <j_{t_2+1}$ (where $i_{k_1+1}=j_{k_2+1}=n$). 

Since $x^{r_1}\approx x^{r_1}e_1\cdots e_{r_1}$, by Lemma \ref{powers}, then we have 
$$
w_1\approx x^{r_1}e_{i_1}\cdots e_{i_{k_1}}\approx x^{r_1}e_1\cdots e_{r_1}e_{i_1}\cdots e_{i_{k_1}}\approx 
x^{r_1}e_1\cdots e_{r_1}e_{i_{t_1+1}}\cdots e_{i_{k_1}}. 
$$
Similarly, we obtain $w_2\approx x^{r_2}e_1\cdots e_{r_2}e_{j_{t_2+1}}\cdots e_{i_{k_2}}$. 

On the other hand, in view of (\ref{xkyk}), $w_1\vartheta=x^{r_1}e_{i_1}\cdots e_{i_{k_1}}=g^{r_1}e_1\cdots e_{r_1}e_{i_1}\cdots e_{i_{k_1}}
=g^{r_1}e_1\cdots e_{r_1}e_{i_{t_1+1}}\cdots e_{i_{k_1}}$ and, similarly, 
$w_2\vartheta=g^{r_2}e_1\cdots e_{r_2}e_{j_{t_2+1}}\cdots e_{j_{k_2}}$. 
Hence, we have 
$$
w_1\vartheta=g^{r_1}|_{\dom(w_1\vartheta)}
\quad\text{and}\quad 
\im(w_1\vartheta)=\Omega_n\setminus\{1,\ldots,r_1, i_{t_1+1}, \ldots, i_{k_1}\}
$$ 
and
$$
w_2\vartheta=g^{r_2}|_{\dom(w_2\vartheta)}
\quad\text{and}\quad 
\im(w_2\vartheta)=\Omega_n\setminus\{1,\ldots,r_2, j_{t_2+1}, \ldots, j_{k_2}\}. 
$$ 
Since $w_1\vartheta=w_2\vartheta$, in particular we have $\im(w_1\vartheta)=\im(w_2\vartheta)$ and so 
$$
\{1,\ldots,r_1, i_{t_1+1}, \ldots, i_{k_1}\}=\{1,\ldots,r_2, j_{t_2+1}, \ldots, j_{k_2}\}. 
$$

If $w_1\vartheta=\emptyset$ then $\im(w_1\vartheta)=\emptyset=\im(w_2\vartheta)$, whence 
$$
\{1,\ldots,r_1, i_{t_1+1}, \ldots, i_{k_1}\}=\Omega_n=\{1,\ldots,r_2, j_{t_2+1}, \ldots, j_{k_2}\}
$$
and so, by Lemma \ref{zero}, we have 
$$
w_1\approx x^{r_1}e_1\cdots e_{r_1}e_{i_{t_1+1}}\cdots e_{i_{k_1}} =  x^{r_1}e_1\cdots e_n \approx e_1\cdots e_n 
\approx x^{r_2}e_1\cdots e_n = x^{r_2}e_1\cdots e_{r_2}e_{j_{t_2+1}}\cdots e_{i_{k_2}} \approx  w_2. 
$$

On the other hand, if $w_1\vartheta\neq\emptyset$, from $g^{r_1}|_{\dom(w_1\vartheta)}=w_1\vartheta= w_2\vartheta=g^{r_2}|_{\dom(w_2\vartheta)}$, 
we have $r_1=r_2$, by Lemma \ref{unique}, and so 
$$
w_1\approx x^{r_1}e_1\cdots e_{r_1}e_{i_{t_1+1}}\cdots e_{i_{k_1}} = x^{r_2}e_1\cdots e_{r_2}e_{j_{t_2+1}}\cdots e_{i_{k_2}} \approx  w_2. 
$$

\smallskip 

Secondly, suppose that $z_1=x$ and $z_2=y$. 

Let $0\leqslant t_1\leqslant k_1$ and  $0\leqslant t_2\leqslant k_2$ be such that 
$i_{t_1}\leqslant r_1 <i_{t_1+1}$ and $j_{t_2}< n-r_2+1 \leqslant j_{t_2+1}$ (where $i_{k_1+1}=j_{k_2+1}=n$). 

As above, we have $w_1\approx x^{r_1}e_1\cdots e_{r_1}e_{i_{t_1+1}}\cdots e_{i_{k_1}}$. On the other hand, 
since $y^{r_2}\approx y^{r_2}e_{n-r_2+1}\cdots e_n$, by Lemma \ref{powers}, we have 
$$
w_2\approx y^{r_2}e_{j_1}\cdots e_{j_{k_2}}\approx y^{r_2}e_{n-r_2+1}\cdots e_n e_{j_1}\cdots e_{j_{k_2}}\approx 
y^{r_2}e_{j_1}\cdots e_{j_{t_2}}e_{n-r_2+1}\cdots e_n. 
$$

Now, in view of (\ref{xkyk}), as above $w_1\vartheta=g^{r_1}e_1\cdots e_{r_1}e_{i_{t_1+1}}\cdots e_{i_{k_1}}$ and 
$$
w_2\vartheta=y^{r_2}e_{j_1}\cdots e_{j_{k_2}}= g^{n-r_2}e_{n-r_2+1}\cdots e_n e_{j_1}\cdots e_{j_{k_2}}= 
g^{n-r_2}e_{j_1}\cdots e_{j_{t_2}}e_{n-r_2+1}\cdots e_n.
$$ 
Hence, we have 
$$
w_1\vartheta=g^{r_1}|_{\dom(w_1\vartheta)}
\quad\text{and}\quad 
\im(w_1\vartheta)=\Omega_n\setminus\{1,\ldots,r_1, i_{t_1+1}, \ldots, i_{k_1}\}
$$ 
and
$$
w_2\vartheta=g^{n-r_2}|_{\dom(w_2\vartheta)}
\quad\text{and}\quad 
\im(w_2\vartheta)=\Omega_n\setminus\{j_1,\ldots,j_{t_2}, n-r_2+1,\ldots,n\}. 
$$ 
Since $w_1\vartheta=w_2\vartheta$, then $\im(w_1\vartheta)=\im(w_2\vartheta)$ and so 
$$
\{1,\ldots,r_1, i_{t_1+1}, \ldots, i_{k_1}\}=\{j_1,\ldots,j_{t_2}, n-r_2+1,\ldots,n\}. 
$$

If $w_1\vartheta\neq\emptyset$, from $g^{r_1}|_{\dom(w_1\vartheta)}=w_1\vartheta= w_2\vartheta=g^{n-r_2}|_{\dom(w_2\vartheta)}$, 
we have $r_1=n-r_2$, by Lemma \ref{unique}, whence 
$$
\{1,\ldots,r_1, i_{t_1+1}, \ldots, i_{k_1}\}=\{j_1,\ldots,j_{t_2}, r_1+1,\ldots,n\}, 
$$
from which follows that 
$$
\{1,\ldots,r_1, i_{t_1+1}, \ldots, i_{k_1}\}=\Omega_n=\{j_1,\ldots,j_{t_2}, r_1+1,\ldots,n\}
$$
and so $\im(w_1\vartheta)=\emptyset$, i.e. $w_1\vartheta=\emptyset$, a contradiction. 
Thus $w_1\vartheta=\emptyset$.

Hence $\im(w_1\vartheta)=\emptyset=\im(w_2\vartheta)$ and so 
$$
\{1,\ldots,r_1, i_{t_1+1}, \ldots, i_{k_1}\}=\Omega_n=\{j_1,\ldots,j_{t_2}, n-r_2+1,\ldots,n\}. 
$$
Then, by Lemma \ref{zero}, we have 
$$
w_1\approx x^{r_1}e_1\cdots e_{r_1}e_{i_{t_1+1}}\cdots e_{i_{k_1}} =  x^{r_1}e_1\cdots e_n \approx e_1\cdots e_n 
\approx y^{r_2}e_1\cdots e_n = y^{r_2}e_{j_1}\cdots e_{j_{t_2}}e_{n-r_2+1}\cdots e_n \approx  w_2. 
$$

\smallskip 

Finally, we suppose that $z_1=z_2=y$. 

Let $0\leqslant t_1\leqslant k_1$ and  $0\leqslant t_2\leqslant k_2$ be such that 
$i_{t_1} < n-r_1+1 \leqslant i_{t_1+1}$ and $j_{t_2}< n-r_2+1 \leqslant j_{t_2+1}$ (where $i_{k_1+1}=j_{k_2+1}=n$). 

As above, we have $w_2\approx y^{r_2}e_{j_1}\cdots e_{j_{t_2}}e_{n-r_2+1}\cdots e_n$ and, analogously, 
$w_1\approx y^{r_1}e_{i_1}\cdots e_{i_{t_1}}e_{n-r_1+1}\cdots e_n$. 

On the other hand, as above, in view of (\ref{xkyk}), we have $w_2\vartheta=g^{n-r_2}e_{j_1}\cdots e_{j_{t_2}}e_{n-r_2+1}\cdots e_n$ and, 
similarly, we get $w_1\vartheta=g^{n-r_1}e_{i_1}\cdots e_{i_{t_1}}e_{n-r_1+1}\cdots e_n$. 
Hence, we have 
$$
w_1\vartheta=g^{n-r_1}|_{\dom(w_1\vartheta)}
\quad\text{and}\quad 
\im(w_1\vartheta)=\Omega_n\setminus\{i_1,\ldots,i_{t_1}, n-r_1+1,\ldots,n\}
$$ 
and
$$
w_2\vartheta=g^{n-r_2}|_{\dom(w_2\vartheta)}
\quad\text{and}\quad 
\im(w_2\vartheta)=\Omega_n\setminus\{j_1,\ldots,j_{t_2}, n-r_2+1,\ldots,n\}. 
$$ 
Since $w_1\vartheta=w_2\vartheta$, then $\im(w_1\vartheta)=\im(w_2\vartheta)$ and so 
$$
\{i_1,\ldots,i_{t_1}, n-r_1+1,\ldots,n\}=\{j_1,\ldots,j_{t_2}, n-r_2+1,\ldots,n\}. 
$$

If $w_1\vartheta=\emptyset$ then $\im(w_1\vartheta)=\emptyset=\im(w_2\vartheta)$, whence 
$$
\{i_1,\ldots,i_{t_1}, n-r_1+1,\ldots,n\}=\Omega_n=\{j_1,\ldots,j_{t_2}, n-r_2+1,\ldots,n\}. 
$$
and so, by Lemma \ref{zero}, we have 
$$
w_1\approx y^{r_1}e_{i_1}\cdots e_{i_{t_1}}e_{n-r_1+1}\cdots e_n 
=y^{r_1}e_1\cdots e_n \approx e_1\cdots e_n 
\approx y^{r_2}e_1\cdots e_n = 
y^{r_2}e_{j_1}\cdots e_{j_{t_2}}e_{n-r_2+1}\cdots e_n \approx  w_2. 
$$

On the other hand, if $w_1\vartheta\neq\emptyset$, from $g^{n-r_1}|_{\dom(w_1\vartheta)}=w_1\vartheta= w_2\vartheta=g^{n-r_2}|_{\dom(w_2\vartheta)}$, 
we have $n-r_1=n-r_2$, by Lemma \ref{unique}, whence $r_1=r_2$ and so 
$$
w_1\approx y^{r_1}e_{i_1}\cdots e_{i_{t_1}}e_{n-r_1+1}\cdots e_n = y^{r_2}e_{j_1}\cdots e_{j_{t_2}}e_{n-r_2+1}\cdots e_n \approx  w_2, 
$$
as required. 
\end{proof}

Next, by using Tietze transformations and applying Proposition \ref{tietze}, we deduce from the previous presentation for $\OCI_n$ 
a new one on the $n$-generators set $\{x,y,e_2,\ldots,e_{n-1}\}$ of $\OCI_n$.  
We will proceed in a similar way to what we did for $\CI_n$. 

\smallskip 

Recall that, as transformations, we have $e_1=yx$ and $e_n=xy$. Therefore, by replacing $e_1$ by $yx$ and $e_n$ by $xy$ in all relations from $U$, we obtain the following relations on the alphabet $\{x,y,e_2,\ldots,e_{n-1}\}$:
\begin{description}

\item $(U_1)$ $e_i^2=e_i$, for $2\leqslant i\leqslant n-1$; $yxyx=yx$ and $xyxy=xy$;

\item $(U_2)$ $xy=xy$ and $yx=yx$;

\item $(U_3)$ $xyx=x$ and $yxy=y$; 

\item $(U_4)$ $e_ie_j=e_je_i$, for $2\leqslant i<j\leqslant n-1$; 
    $xye_i=e_ixy$ and $yxe_i=e_iyx$, for $2\leqslant i\leqslant n-1$; $yx^2y=xy^2x$;  

\item $(U_5)$ $xe_{i+1}=e_ix$, for $2\leqslant i\leqslant n-2$; $x^2y=e_{n-1}x$ and $yx^2=xe_2$; 

\item $(U_6)$ $yxe_2\cdots e_{n-1}xy=xe_2\cdots e_{n-1}xy$.

\end{description}
 
Notice that, clearly, the relations  $xy=xy$ and $yx=yx$ are trivial and the relations $yxyx=yx$ and $xyxy=xy$ are consequences of the relation $xyx=x$. 

So, let $V$ be the following set of monoid relations on the alphabet $D=\{x,y,e_2,\ldots,e_{n-1}\}$: 
\begin{description}

\item $(V_1)$ $e_i^2=e_i$, for $2\leqslant i\leqslant n-1$; 

\item $(V_2)$ $xyx=x$ and $yxy=y$; 

\item $(V_3)$ $yx^2y=xy^2x$;  

\item $(V_4)$ $e_ie_j=e_je_i$, for $2\leqslant i<j\leqslant n-1$; 

\item $(V_5)$ $xye_i=e_ixy$ and $yxe_i=e_iyx$, for $2\leqslant i\leqslant n-1$; 

\item $(V_6)$ $xe_{i+1}=e_ix$, for $2\leqslant i\leqslant n-2$; 

\item $(V_7)$ $x^2y=e_{n-1}x$ and $yx^2=xe_2$; 

\item $(V_8)$ $yxe_2\cdots e_{n-1}xy=xe_2\cdots e_{n-1}xy$.

\end{description}
Notice that $|V|=\frac{1}{2}(n^2+3n)$. 

Thus, we have: 

\begin{theorem}\label{rankpresoci} 
The monoid $\OCI_n$ is defined by the presentation $\langle D \mid V\rangle$ on $n$ generators and $\frac{1}{2}(n^2+3n)$ relations. 
\end{theorem}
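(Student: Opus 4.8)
The plan is to obtain the presentation $\langle D \mid V\rangle$ directly from the presentation $\langle C \mid U\rangle$ of Theorem \ref{firstpresoci} by a finite sequence of elementary Tietze transformations, and then to invoke Proposition \ref{tietze}. The key observation is that the relations $yx = e_1$ and $xy = e_n$ of $U_2$ are precisely of the form $b = w$ with $b \in \{e_1, e_n\}$ and $w \in (C \setminus \{e_1, e_n\})^*$, so that transformation $(T4)$ may be applied to delete the generators $e_1$ and $e_n$, replacing every occurrence of $e_1$ by $yx$ and of $e_n$ by $xy$ in all the remaining relations of $U$ (the order of the two eliminations is immaterial, since both $yx$ and $xy$ avoid $e_1$ and $e_n$).

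The computation of the resulting relations on the alphabet $D = \{x, y, e_2, \ldots, e_{n-1}\}$ has already been carried out in the discussion preceding the theorem, and I would present it systematically, going through $U_1$ to $U_6$ in turn and recording the substituted relations. For instance, $e_1^2 = e_1$ and $e_n^2 = e_n$ become $yxyx = yx$ and $xyxy = xy$; the relations $xe_1 = x$ and $e_1 y = y$ of $U_3$ become $xyx = x$ and $yxy = y$; the commutations $e_1 e_j = e_j e_1$ and $e_i e_n = e_n e_i$ in $U_4$ yield the relations $yx\,e_i = e_i\,yx$ and $xy\,e_i = e_i\,xy$ of $V_5$, while $e_1 e_n = e_n e_1$ contributes $yx^2 y = xy^2 x$; the boundary cases $i = 1$ and $i = n-1$ of $U_5$ give $yx^2 = xe_2$ and $x^2 y = e_{n-1} x$; and $U_6$ becomes $yx\,e_2 \cdots e_{n-1}\,xy = x\,e_2 \cdots e_{n-1}\,xy$. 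The two relations $xy = xy$ and $yx = yx$ coming from $U_2$ are trivial and may be deleted at once by $(T2)$.

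It then remains to prune the two relations $yxyx = yx$ and $xyxy = xy$, which I would remove via $(T2)$ by checking that each is a consequence of $V_2$: indeed $yxyx = (yxy)x$ rewrites to $yx$ using $yxy = y$, and $xyxy = (xyx)y$ rewrites to $xy$ using $xyx = x$. After these deletions one is left exactly with the set $V$ on the alphabet $D$, so Proposition \ref{tietze} yields that $\langle D \mid V\rangle$ defines $\OCI_n$. Finally I would tally the relations — $n-2$ from $V_1$, two from $V_2$, one from $V_3$, $\binom{n-2}{2}$ from $V_4$, $2(n-2)$ from $V_5$, $n-3$ from $V_6$, two from $V_7$, one from $V_8$ — which sum to $\frac{1}{2}(n^2 + 3n)$, confirming the stated count.

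Since every step is a legitimate Tietze transformation and the starting presentation is known to define $\OCI_n$, there is no deep obstacle here: the work is entirely verification. The only points demanding care are the correct form of the $(T4)$ eliminations (ensuring $yx$ and $xy$ genuinely avoid $e_1$ and $e_n$, which they do) and the book-keeping that confirms both the redundancy of $yxyx = yx$ and $xyxy = xy$ and the final relation count.
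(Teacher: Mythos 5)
Your proposal is correct and follows essentially the same route as the paper: the paper also derives $\langle D\mid V\rangle$ from $\langle C\mid U\rangle$ by applying (T4) to the relations $yx=e_1$ and $xy=e_n$ of $U_2$ (which are already of the required form $b=w$, so no (T1) step is needed), substituting throughout, and then deleting the trivial relations $xy=xy$, $yx=yx$ and the redundant $yxyx=yx$, $xyxy=xy$ via (T2) before invoking Proposition \ref{tietze}. Your relation count and the verification that $yxyx=yx$ and $xyxy=xy$ follow from $V_2$ match the paper's (in fact slightly more carefully, since you record which relation of $V_2$ is used in each rewriting).
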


\bigskip 

\lastpage 

\end{document}